\documentclass[reqno,A4paper]{amsart}

\usepackage{amsmath}
\usepackage{amssymb}
\usepackage{amsthm}
\usepackage{enumerate}
\usepackage{enumitem} 
\usepackage{mathrsfs}
\usepackage{eqlist}
\usepackage{array}
\usepackage{float}
\usepackage{hyperref}
\usepackage{tikz}
\usepackage{forest}

\usepackage{tikz}
\usepackage{tikz-qtree}

\setlength{\textwidth}{150mm}
\setlength{\textheight}{206mm}
\setlength{\oddsidemargin}{5mm}
\setlength{\evensidemargin}{5mm}


\theoremstyle{plain}
\newtheorem{theorem}{Theorem}[section]

\newtheorem{lemma}[theorem]{Lemma}
\newtheorem{corol}[theorem]{Corollary}

\theoremstyle{definition}

\numberwithin{equation}{section}



\begin{document}

\title[Markoff $m$-triples with $k$-Fibonacci components]{Markoff $m$-triples with $k$-Fibonacci components}

\author[D. Alfaya, L. A. Calvo, A. Mart\'inez de Guinea, J. Rodrigo \and A. Srinivasan]{D. Alfaya*  ***, L. A. Calvo**, A. Mart\'inez de Guinea***, J. Rodrigo* \and A. Srinivasan**}

\newcommand{\acr}{\newline\indent}

\address{\llap{*\,}Department of Applied Mathematics,\acr
ICAI School of Engineering, Comillas Pontifical University,\acr
C/Alberto Aguilera 25, 28015 Madrid,\acr Spain.}
\email{dalfaya@comillas.edu, jrodrigo@comillas.edu}

\address{\llap{**\,}Department of Quantitative Methods,\acr
ICADE, Comillas Pontifical University,\acr
C/Alberto Aguilera 23, 28015 Madrid,\acr Spain.}
\email{lacalvo@comillas.edu, asrinivasan@icade.comillas.edu}

\address{\llap{***\,}Institute for Research in Technology,\acr
ICAI School of Engineering,\acr
Comillas Pontifical University,\acr
C/Santa Cruz de Marcenado 26, 28015 Madrid,\acr Spain.}
\email{dalfaya@comillas.edu, alex.m.guinea@alu.comillas.edu}

\thanks{\textit{Acknowledgements}. This research was supported by project CIAMOD (Applications of computational methods and artificial intelligence to the study of moduli spaces, project PP2023\_9) funded by Convocatoria de Financiaci\'on de Proyectos de Investigaci\'on Propios 2023, Universidad Pontificia Comillas, and by grant PID2022-142024NB-I00 funded by MCIN/AEI/10.13039/501100011033.}

\keywords{Markoff triples, generalized Markoff equation, generalized Fibonacci solutions.}

\begin{abstract}
We classify all solution triples with $k$-Fibonacci components to the equation $x^2+y^2+z^2=3xyz+m,$ where $m$ is a positive integer and $k\geq 2$. As a result,  for $m=8$, we have the Markoff triples with Pell components $(F_2(2), F_2(2n), F_2(2n+2))$, for $n\geq 1$. For all other $m$ there exists at most one such ordered triple, except when  $k=3,$ $a$ is odd, $b$ is even and $b\geq a+3$,  where $(F_3(a),F_3(b),F_3(a+b))$ and $(F_3(a+1),F_3(b-1),F_3(a+b))$ share the same $m$.
\end{abstract}

\maketitle

\providecommand{\keywords}[1]
{
  \small	
  \textbf{\textit{Keywords--}} #1
}

\setcounter{section}{0}
\section{Introduction}

In the realm of number theory, Markoff $m$-triples represent an interesting area of exploration. These triples are positive integer solutions to the Markoff $m$-equation
\begin{equation}\label{Markoffeq}
x^2+y^2+z^2=3xyz+m,
\end{equation}
where $m$ is a positive integer.
The case $m=0$ corresponds to the original equation studied by A. A. Markoff in \cite{M1,M2}, where it was proved that all the solution triples are distributed in a unique tree. Some of its branches are interesting families of numbers: Fibonacci, Pell, etc. Many authors studied generalizations of this equation (\cite{Mor}, \cite{GS}, \cite{SC}) and noticed that, depending on $m,$ there could exist one, multiple trees or none at all. In particular, in \cite{SC} it is proved that the number of trees, for every $m>0,$ is equal to the number of Markoff $m$-triples $(x,y,z)$ that are minimal, that is to say, those that satisfy the inequality
\begin{equation}\label{Minimals}
z\geq 3xy.
\end{equation}

In this paper, we study Markoff $m$-triples with $k$-Fibonacci components, i.e. solutions of the Markoff $m$-equation (\ref{Markoffeq}), such that all its components are $k$-Fibonacci numbers. These numbers are defined recursively for every positive integer $k$ as follows
\begin{equation}
\label{df:k_fibonacci}
\left\{
\begin{array}{l}
F_k(0)=0\\
F_k(1)=1\\
F_k(n)=k F_k(n-1)+F_k(n-2), \quad \forall n\geq 2.\\
\end{array}\right .
\end{equation}
When $k=1,$  the sequence corresponds to the classic Fibonacci numbers, and for $k=2$, it yields Pell numbers. Some particular cases of Markoff $m$-triples with $k$-Fibonacci components  have already been studied: $(k=1,\, m=0),$ was studied in \cite{LS};  $(k=2,\, m=0),$ was examined in \cite{KST}; $(k>1,\, m=0),$ was treated in \cite{Gom}; the case $m=0$, with Lucas sequences in \cite{AL},\cite{RSP} and, finally, the case $(k=1,\, m>0)$ was dealt with in \cite{ACMRS}. Because of this, henceforth, we will assume that $m>0$ and $k\geq 2.$

In this work, we classify all Markoff $m$-triples with $k$-Fibonacci components, dividing our analysis first into non-minimal triples and then into minimal ones. Specifically, our main results are the following.

\begin{theorem} \label{th1}
Every non-minimal Markoff $m$-triple with $k$-Fibonacci components and $m>0$ is a Markoff $8$-triple of the form $(F_2(2), F_2(2n),F_2(2n+2)),$ for $n\geq 2$.
\end{theorem}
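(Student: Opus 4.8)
The plan is to start from a non-minimal Markoff $m$-triple $(x,y,z)$ with $x \le y \le z$, all components $k$-Fibonacci numbers, and $m > 0$. Non-minimal means $z < 3xy$. Write $x = F_k(a)$, $y = F_k(b)$, $z = F_k(c)$ with $a \le b \le c$. The key identity I would exploit is the quasi-addition formula for $k$-Fibonacci numbers: $F_k(p+q) = F_k(p)F_k(q+1) + F_k(p-1)F_k(q)$, together with the Catalan-type / d'Ocagne-type identities relating $F_k(a)F_k(b)$ to $F_k$ evaluated at $a+b$ and $a-b$. Because $z^2 + m = 3xyz - x^2 - y^2$, and since $m > 0$ forces $z^2 < 3xyz$, i.e. $z < 3xy$, we get tight two-sided control: $3xyz - x^2 - y^2 - z^2 = m$ with $0 < m$, so $z$ is essentially pinned between $3xy - (\text{small})$ and $3xy$.

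**Forcing $x$ to be small.** The main structural step is to show $x = F_k(a)$ must be tiny — concretely $x \in \{1\}$ (i.e. $a = 1$ or $a = 2$ only when $k=1$, but here $k \ge 2$ so $F_k(2) = k \ge 2$). Solving \eqref{Markoffeq} for $z$ as a quadratic, $z = \tfrac{3xy \pm \sqrt{9x^2y^2 - 4(x^2+y^2-m)}}{2}$, the non-minimal branch takes the $+$ sign would give $z \approx 3xy$ which is minimal, so non-minimality forces the $-$ root: $z = \tfrac{3xy - \sqrt{9x^2y^2 - 4x^2 - 4y^2 + 4m}}{2}$. For this to satisfy $z \ge y$ we need $3xy - 2y \ge \sqrt{\cdots}$, and squaring and simplifying bounds $x$ from above in terms of how $m$ compares to $x^2 + y^2$; the upshot (this is where the real work lies) is that $x$ cannot grow, so $x = F_k(2) = k$ at most, and a finite check rules out $x = k$ for $k \ge 3$ while for $k = 2$ only $x = F_2(2) = 2$ survives, or $x = 1$. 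I would then treat $x = 1$ and $x = F_2(2) = 2$ as the two surviving cases.

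**Pinning down the Pell family.** With $x$ small, substitute back: if $x = 1$, equation \eqref{Markoffeq} becomes $1 + y^2 + z^2 = 3yz + m$, i.e. $m = 1 + y^2 + z^2 - 3yz = 1 - (3yz - y^2 - z^2)$; for $y = F_k(b)$, $z = F_k(c)$ consecutive-ish $k$-Fibonacci numbers one computes $3yz - y^2 - z^2$ using the recurrence and shows it is either $\le 0$ (giving $m \le $ something incompatible with $m>0$ and the $k$-Fibonacci constraint) or forces $b, c$ into a rigid relation with no valid $m>0$ solution — except the degenerate $y = z$ cases. If $x = 2 = F_2(2)$ (so $k = 2$), then \eqref{Markoffeq} reads $4 + y^2 + z^2 = 6yz + m$, and I would verify via the Pell identity $F_2(2n+2) = 6 F_2(2n) - F_2(2n-2)$ and $F_2(2n+2)^2 + F_2(2n)^2 - 6 F_2(2)F_2(2n)F_2(2n+2) = -4$ (a Vieta-jumping fixed relation for the Pell "sub-branch") that $(2, F_2(2n), F_2(2n+2))$ is a solution with $m = 8$ for all $n \ge 2$, and conversely that these are the only ones, by showing the index gap $c - b$ must equal $2$ and $b$ must be even using the $k$-Fibonacci growth rate $F_k(n+1)/F_k(n) \to (k+\sqrt{k^2+4})/2$ to bound $3xy/z$.

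**Main obstacle.** The hardest part is the second step — rigorously bounding $x = F_k(a)$ from above uniformly in $k$ and $m$. Naively $m$ can be arbitrarily large, so one cannot just say "$m$ small implies $x$ small"; the argument must leverage that $y, z$ are \emph{also} $k$-Fibonacci, so the ratio $z/y$ is quantized (roughly a power of the dominant root), and then the equation $3xyz = x^2 + y^2 + z^2 + m$ with $0 < m < $ something forces $z/y$ close to $3x$, which by quantization of $k$-Fibonacci ratios forces $c - b$ small and then $x$ small. Making this quantization estimate precise — handling the low-index edge cases where the ratio hasn't yet settled near the dominant root — is the delicate bookkeeping I expect to consume most of the proof. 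Once $x$ is controlled, the rest is identities and a finite case analysis invoking Theorem~\ref{th1}'s own statement pattern.
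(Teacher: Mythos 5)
There is a genuine gap: the step you yourself flag as ``where the real work lies'' --- bounding $x=F_k(a)$ from above --- is never carried out, and the mechanism you propose for it does not work as stated. You argue that non-minimality forces $z$ to be the smaller root $\tfrac{3xy-\sqrt{9x^2y^2-4(x^2+y^2-m)}}{2}$ of the quadratic in $z$, but this is backwards: the two roots sum to $3xy$, so for an ordered non-minimal triple $z$ is generically the \emph{larger} root. For instance $(2,12,70)$ is non-minimal ($70<72=3xy$) and the roots are $70$ and $2$; the ``$+$'' root is the non-minimal one. Since the inequality $3xy-2y\ge\sqrt{\cdots}$ you want to square is built on the wrong root, the ensuing bound on $x$ does not get off the ground. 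There are also sign slips upstream: from \eqref{Markoffeq} one gets $z^2-m=3xyz-x^2-y^2-z^2+z^2$, not $z^2+m=3xyz-x^2-y^2$, and $m>0$ certainly does not force $z<3xy$ (minimal triples with $m>0$ are exactly the subject of Theorem \ref{th2}). Finally, your plan for $k\ge3$ (``a finite check rules out $x=k$'') misses that the correct conclusion is much stronger: for $k\ge3$ \emph{no} non-minimal triple with $k$-Fibonacci components exists, whatever $x$ is.

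The paper's route replaces your root-selection argument by an index computation: Lemma \ref{lemma:F(n+m)<=3F(n)F(m)} shows non-minimality is equivalent to $c<a+b$ for $k\ge3$ (resp.\ $c\le a+b$ for $k=2$, away from $(2,2,4)$), and Lemma \ref{lemma:m_negativo} shows that under exactly these index constraints $m_k(a,b,c)\le 0$ unless $k=2$ and $a\le 2$, using the sum-of-squares identity of Lemma \ref{lemma:sumSquares} and the D'Ocagne identity. That is the quantitative content your sketch is missing. Your final step is closer to the mark --- for $a=2$, $k=2$ the computation $m_2(2,b,b+2)=4(1-(-1)^{b+1})$ via Simson's identity does pin down $m=8$ and $b$ even --- though your stated identity $F_2(2n+2)^2+F_2(2n)^2-6F_2(2n)F_2(2n+2)=-4$ has the wrong sign (it equals $+4$, consistent with $m=4+4=8$), and you do not address why $n\ge2$ rather than $n\ge1$ (the triple $(2,2,12)$ satisfies $12=3\cdot2\cdot2$ and is therefore minimal, hence excluded from the statement).
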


In particular, the non-minimal Markoff $m$-triples with $k$-Fibonacci components are situated on the upper branch of the 8-tree with minimal triple $(2,2,12).$ The triples in this branch are composed of Pell numbers, as shown in Figure \ref{8-markovtree}. 

\begin{figure}[H]
\centering
\begin{tikzpicture}[grow'=right, edge from parent/.style={draw, -latex}]
\tikzset{
  level 1/.style={level distance=2.3cm, sibling distance=0.5cm},
  level 2/.style={level distance=2.7cm, sibling distance=0.5cm},
  level 3/.style={level distance=3.2cm, sibling distance=0.5cm},
  level 4/.style={level distance=4cm, sibling distance=0.3cm},
  level 5/.style={level distance=4.5cm, sibling distance=0.25cm}
}
\Tree 
[. (2,2,12)
    [.\textbf{(2,12,70)}
        [.\textbf{(2,70,408)}
            [.\textbf{(2,408,2378)}
                [.\textbf{(2,2378,13860)} ]
                [.(408,2378,2910670) ]
            ]
            [.(70,408,85678)
                [.(70,85678,17991972) ]
                [. (408,85678,104869802) ]
            ]
        ]
        [.(12,70,2518)
            [.(12,2518,90578)
                [.(12,90578,3258290) ]
                [.(2518,90578,684226200) ]
            ]
            [.(70,2518,528768)
                [.(70,528768,111038762) ]
                [.(2518,528768,3994313402) ]
            ]
        ]
    ]
]
\end{tikzpicture}
\caption{Beginning of the Markoff $8$-tree with minimal triple $(2,2,12).$ The sequence of non-minimal $8$-Markoff triples with $2$-Fibonacci components (Pell components) is represented in bold.}
\label{8-markovtree}
\end{figure}
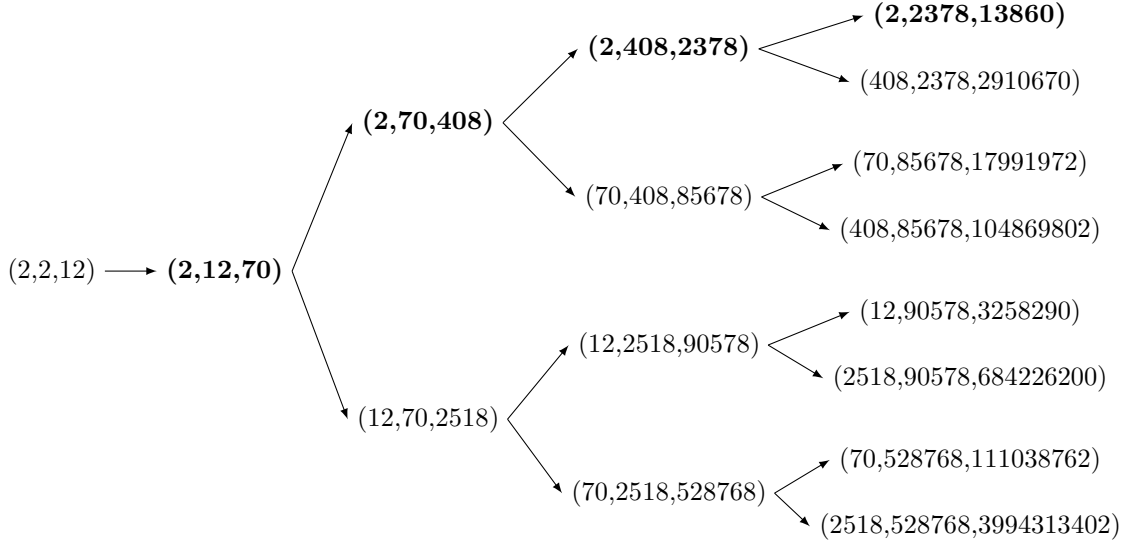

\begin{theorem} \label{th2}
If $m>0$ admits a minimal Markoff $m$-triple with $k$-Fibonacci components, then it is unique, except 
for $k=3$ and all  pairs of triples $(F_3(a),F_3(b),F_3(a+b)), \,\,(F_3(a+1),F_3(b-1),F_3(a+b)),$ for $a$ odd and $b$ even with $b\ge a+3$.
\end{theorem}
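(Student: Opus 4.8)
The plan is to take a minimal Markoff $m$-triple $(F_k(a),F_k(b),F_k(c))$ with $1\le a\le b\le c$, use the reformulation
\[
m=F_k(a)^2+F_k(b)^2+F_k(c)\bigl(F_k(c)-3F_k(a)F_k(b)\bigr),\qquad F_k(c)-3F_k(a)F_k(b)\ge 0,
\]
and argue throughout with the $k$-Fibonacci addition, Catalan and d'Ocagne identities and the companion $k$-Lucas numbers $L_k$. First I would note $c\ge a+b$, which follows from $F_k(a+b-1)=F_k(a)F_k(b)+F_k(a-1)F_k(b-1)\le 2F_k(a)F_k(b)\le\tfrac{2}{3}F_k(c)<F_k(c)$. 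The boundary case $F_k(c)=3F_k(a)F_k(b)$ is dealt with separately: combined with $c\ge a+b$ and $F_k(a+b)=kF_k(a)F_k(b)+F_k(a-1)F_k(b)+F_k(a)F_k(b-1)$ it puts $3F_k(a)F_k(b)$ strictly between consecutive $k$-Fibonacci numbers unless $(k,a,b,c)$ is $(2,2,2,4)$ or $(3,1,1,2)$, i.e. the triples $(2,2,12)$ ($m=8$) and $(1,1,3)$ ($m=2$), for which the statement is an explicit finite check. Henceforth $F_k(c)-3F_k(a)F_k(b)\ge 1$; I call the triple \emph{balanced} when $c=a+b$ and \emph{sporadic} when $c>a+b$.

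The key point is that $m$ determines $c$. An upper bound $m\le\beta_kF_k(c)^2$ with $\beta_k$ an absolute constant near $\tfrac{10}{9}$ comes from $0\le F_k(a)F_k(b)\le\tfrac{1}{3}F_k(c)$ and $F_k(a)^2+F_k(b)^2\le 1+\tfrac{1}{9}F_k(c)^2$. A lower bound $m\ge\alpha_kF_k(c)^2$ is obtained separately in the two regimes: balanced, using $F_k(c)-3F_k(a)F_k(b)=(k-3)F_k(a)F_k(b)+F_k(a-1)F_k(b)+F_k(a)F_k(b-1)$ together with $F_k(n-1)>F_k(n)/(k+1)$; sporadic, using $3F_k(a)F_k(b)\le F_k(a+b-1)\le F_k(c-2)$, hence $F_k(c)-3F_k(a)F_k(b)\ge F_k(c)-3F_k(c-2)>\tfrac{1}{3}F_k(c)$. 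In each regime $\beta_k/\alpha_k<k^2$, and since the smallest ratio of consecutive $k$-Fibonacci numbers is $F_k(2)/F_k(1)=k$, within that regime $c$ is determined by $m$; further, the sporadic lower bound dominates the balanced upper bound to the extent that the two $m$-ranges can only overlap if $F_k(c)/F_k(c')$ lies in an interval strictly between $1$ and $k$, which the $k$-Fibonacci ratios miss. So two distinct minimal $m$-triples are both balanced with a common $c$, or both sporadic with a common $c$.

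With $c$ fixed, subtracting two instances of the reformulation gives
\[
3F_k(c)\bigl(F_k(a')F_k(b')-F_k(a)F_k(b)\bigr)=F_k(a')^2+F_k(b')^2-F_k(a)^2-F_k(b)^2 .
\]
Since every product $F_k(x)F_k(y)$ with $x+y=\sigma$ lies in $[F_k(\sigma-1),F_k(\sigma))$, products of a larger index--sum strictly dominate those of a smaller one; comparing magnitudes (using $\sigma,\sigma'\le c$, so $F_k(\sigma'-1)<F_k(c)/k$) forces $a+b=a'+b'=:\sigma$. Writing $(a',b')=(a+t,b-t)$ after re-sorting and feeding in the d'Ocagne identity $F_k(a)F_k(b)-F_k(a+t)F_k(b-t)=(-1)^{b-t}F_k(a-b+t)F_k(t)$, the squaring identity $F_k(x)^2-(-1)^{x+y}F_k(y)^2=F_k(x+y)F_k(x-y)$, and $F_k(\sigma+t)\pm F_k(\sigma-t)\in\{F_k(\sigma)L_k(t),\,L_k(\sigma)F_k(t)\}$, the equation collapses --- after the parity bookkeeping that forces $a$ odd --- to one of $L_k(t)=3F_k(t)$, $L_k(q)=3F_k(q)$ (with $q=b-a-t$), $L_k(\sigma)=3F_k(\sigma)$, or, in the sporadic case where $c>\sigma$, a relation forcing $3F_k(c)\le(k+\tfrac{2}{k})F_k(\sigma)$, which is impossible since $3F_k(c)\ge 3F_k(\sigma+1)\ge 3kF_k(\sigma)$. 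As $L_k(n)/F_k(n)\in[k,k+\tfrac{2}{k}]$, the equation $L_k(n)=3F_k(n)$ has only the solutions $(k,n)=(2,2)$ and $(k,n)=(3,1)$; the first is excluded in context (it would need $\sigma=2$, hence $a=b=1$, which has no distinct partner) and the second forces $k=3$ and, after re-sorting, $t=1$. For $k=3$, $t=1$ the surviving parity conditions are exactly $a$ odd, $b$ even, $b\ge a+3$ --- the asserted family --- and since $m(a,b)=m(a+1,b-1)$ needs $a$ odd whereas $m(a+1,b-1)=m(a+2,b-2)$ would need $a$ even, no value of $m$ belongs to three triples, so the coincidences occur in genuine pairs.

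The hard part I anticipate is the sporadic regime: securing a lower bound $m\ge\alpha_kF_k(c)^2$ with a constant good enough both to isolate $c$ and to separate the balanced and sporadic $m$-ranges, and disposing of sporadic--sporadic coincidences by a family of size comparisons like the ones above. These estimates are tight precisely at $k=2$ and $k=3$, so a small residue of configurations with very small indices will have to be ruled out by direct computation.
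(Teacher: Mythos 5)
Your overall architecture coincides with the paper's: first pin down $c$ from $m$ via two‑sided bounds of the shape $\alpha_k F_k(c)^2<m<\beta_k F_k(c)^2$, then force $a+b=a'+b'$, then analyse the residual identity, with the $k=3$, $a'=a+1$ family surviving. Your endgame is a genuinely different route from the paper's: instead of the case‑by‑case inequalities ($k=2$; $k\ge 4$; $k=3$ with $b'\le b-2$; $c\ge a+b+1$), you factor $F_k(a)^2+F_k(b)^2-F_k(a')^2-F_k(b')^2$ exactly and reduce to equations of the form $(\text{$k$-Lucas})=3\cdot(\text{$k$-Fibonacci})$, whose only solutions are $(k,n)=(2,2),(3,1)$; this is elegant and, where it applies, cleaner than the paper's estimates. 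However, two genuine gaps remain. First, the quantitative heart — disjointness of the $m$‑ranges for distinct $c$ and for the balanced/sporadic regimes — does not close with the constants you state. In the balanced regime for $k=3$ the crude inequalities only give $F_3(c)-3F_3(a)F_3(b)=F_3(a-1)F_3(b)+F_3(a)F_3(b-1)\ge \tfrac{2}{(k+1)^2(1+1/k^2)}F_3(c)$, i.e. a lower constant near $0.11$ against an upper constant near $1.1$, so $\beta_3/\alpha_3>9=k^2$; and if the constants fail to separate consecutive $c$, they fail for \emph{every} $c$, so there is no finite residue to dispatch by computation. This is precisely why the paper needs the monotonicity reduction of Lemma \ref{lemma:increasing} (to the extremal $b=c-a-s$) combined with the Karamata bound of Lemma \ref{lemma:KaramataBound}; some refinement of this kind is indispensable, not optional.

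Second, the collapse to Lucas equations is incomplete. With $t=a'-a=b-b'$ and $q=b'-a=b-a'$, the identity $F_k(x)^2-(-1)^{x-y}F_k(y)^2=F_k(x+y)F_k(x-y)$ pairs $(b,b')$ with $(a',a)$ into a clean difference when $t$ is even, and $(b,a')$ with $(b',a)$ when $q$ is even; but when $t$ and $q$ are both odd (e.g. $(a,b,a',b')=(1,5,2,4)$) both pairings yield sums of squares, the left side does not factor as $F_k(\cdot)\bigl(F_k(\sigma+\cdot)-F_k(\sigma-\cdot)\bigr)$, and that parity class needs a separate argument — in the paper it is absorbed by the inequality cases of the main proof. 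Relatedly, the step $a+b=a'+b'$ is asserted from "products of larger index‑sum dominate", but the direction $a+b>a'+b'$ genuinely requires the minimality hypothesis $F_k(c)\ge 3F_k(a)F_k(b)$, as in Lemma \ref{lemma:a+b=a'+b'}: a pure size comparison of the products is not enough, because the term $F_k(b)^2-F_k(b')^2$ on the other side of the equation is not negligible against $3F_k(c)\bigl(F_k(a)F_k(b)-F_k(a')F_k(b')\bigr)$ without it. In short, the plan is sound in outline and the Lucas‑number finish is a nice alternative where it applies, but the two estimates above carry the actual content of the theorem and are not yet supplied.
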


The paper is structured as follows. Section \ref{section:preliminary} provides certain identities and inequalities satisfied by $k$-Fibonacci numbers which will be useful in the next sections. Although most of them are well known \cite{F}, \cite{V}, \cite{Ko}, we have included proofs for some of them for the sake of completeness. In Section \ref{section:non-minimal}, we prove Theorem \ref{th1} and in Section \ref{section:minimal}, Theorem \ref{th2}. The strategy to obtain uniqueness in minimal Markoff $m$-triples $(F_k(a), F_k(b), F_k(c))$ except in the case $k=3, c=a+b, a$ odd, $b$ even and $b\geq a+3$ involves proving that any pair of such triples which share the same $m$ must have the same third component $c$, and the sum $a + b$ should be constant (see Lemmas \ref{lemma:c=c'} and \ref{lemma:a+b=a'+b'}). These two lemmas, in turn, follow from Lemma \ref{lemma:KaramataBound}, which computes a lower bound for the $m$ associated with an $m$-triple $(F_k(a), F_k(b), F_k(c))$ in terms of $k$ and $c$.

%
%
%

\section{Some preliminary results on \texorpdfstring{$k$}{k}-Fibonacci numbers}
\label{section:preliminary}
For any $k>0$ and $n\geq 0$ the $n$-th term of the sequence of $k$-Fibonacci numbers, defined in equation \eqref{df:k_fibonacci}, can be obtained using Binet's formula
\begin{equation}
\label{eq:Binet}
F_k(n)=\frac{\alpha_k^n-\bar{\alpha}_k^n}{D_k},
\end{equation}
where $\alpha_k$ and $\bar{\alpha}_k$ are the roots of the characteristic polynomial of the recurrence $\alpha^2-k\alpha-1=0$ and $D_k=\alpha_k-\bar{\alpha}_k$.  Concretely,
$$\alpha_k = \frac{k+\sqrt{k^2+4}}{2}, \quad \bar{\alpha}_k=\frac{k-\sqrt{k^2+4}}{2}, \quad D_k=\alpha_k-\bar{\alpha}_k = \sqrt{k^2+4}.$$
The above formula is well known; for a proof the reader may consult Theorem 7.4 of \cite{Ko}. It is a consequence of the fact that any $k$-Fibonacci number is defined by recurrence relation \eqref{df:k_fibonacci} and it is a solution of the corresponding second-order finite difference equation.
Notice that $\alpha_k\bar{\alpha}_k=-1$. In particular, for $k=1$, $\alpha_1=\varphi$ and $D_1=\sqrt{5}$, we have the classical Binet formula for the Fibonacci numbers, where $\varphi$ represents the Golden Ratio.

\begin{lemma}[Generalization of Vajda's Identity for $k$-Fibonacci numbers] \label{lemma:Vajda} For any positive numbers $i,j, k,$ $$F_k(n+i)F_k(n+j)-F_k(n)F_k(n+i+j)=(-1)^nF_k(i)F_k(j).$$
\end{lemma}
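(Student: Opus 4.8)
The plan is to prove the identity directly from Binet's formula \eqref{eq:Binet}, which reduces everything to algebraic manipulation of the powers of $\alpha_k$ and $\bar\alpha_k$. Writing $\alpha=\alpha_k$, $\bar\alpha=\bar\alpha_k$, $D=D_k$ for brevity, I would substitute $F_k(m)=(\alpha^m-\bar\alpha^m)/D$ into each of the four factors appearing in the left-hand side and expand the two products $F_k(n+i)F_k(n+j)$ and $F_k(n)F_k(n+i+j)$ over the common denominator $D^2$. In both expansions the pure terms $\alpha^{2n+i+j}$ and $\bar\alpha^{2n+i+j}$ appear with coefficient $+1$, so they cancel upon subtraction, leaving only the four mixed terms
\[
F_k(n+i)F_k(n+j)-F_k(n)F_k(n+i+j)=\frac{1}{D^2}\Bigl(-\alpha^{n+i}\bar\alpha^{n+j}-\bar\alpha^{n+i}\alpha^{n+j}+\alpha^{n}\bar\alpha^{n+i+j}+\bar\alpha^{n}\alpha^{n+i+j}\Bigr).
\]

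The key step is then to use the relation $\alpha\bar\alpha=-1$ noted after Binet's formula: every one of the four remaining terms contains the factor $(\alpha\bar\alpha)^n=(-1)^n$, which I would pull out front. What is left inside the parentheses is $\alpha^{i+j}+\bar\alpha^{i+j}-\alpha^i\bar\alpha^j-\bar\alpha^i\alpha^j$, and this factors cleanly as $(\alpha^i-\bar\alpha^i)(\alpha^j-\bar\alpha^j)$. Dividing by $D^2$ and recognizing $(\alpha^i-\bar\alpha^i)/D=F_k(i)$ and $(\alpha^j-\bar\alpha^j)/D=F_k(j)$ gives exactly $(-1)^nF_k(i)F_k(j)$, as claimed.

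There is essentially no serious obstacle here; the only thing requiring a little care is the sign bookkeeping when expanding the products and the observation that the factorization $\alpha^{i+j}+\bar\alpha^{i+j}-\alpha^i\bar\alpha^j-\bar\alpha^i\alpha^j=(\alpha^i-\bar\alpha^i)(\alpha^j-\bar\alpha^j)$ is what makes the two generalized-Fibonacci factors reappear. As an alternative one could fix $i,j$ and induct on $n$ using the recurrence \eqref{df:k_fibonacci} — the base cases $n=0,1$ being immediate and the inductive step following from $F_k(n+1)=kF_k(n)+F_k(n-1)$ — but the Binet computation is shorter and makes the role of the sign $(-1)^n$ transparent, so that is the route I would take.
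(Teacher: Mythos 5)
Your proof is correct and is essentially identical to the paper's: both substitute Binet's formula, cancel the pure terms $\alpha_k^{2n+i+j}$ and $\bar\alpha_k^{2n+i+j}$, extract $(\alpha_k\bar\alpha_k)^n=(-1)^n$ from the mixed terms, and factor the remainder as $(\alpha_k^i-\bar\alpha_k^i)(\alpha_k^j-\bar\alpha_k^j)=D_k^2F_k(i)F_k(j)$. No issues.
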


\begin{proof}
Multiplying the left hand side by $D_k^2$ and using Binet's formula (\ref{eq:Binet}) and the fact that $\alpha_k\bar{\alpha}_k=-1$ yields
\begin{multline*}
    D_k^2\left(F_k(n+i)F_k(n+j)-F_k(n)F_k(n+i+j)\right) = (\alpha_k^{n+i}-\bar{\alpha}_k^{n+i})(\alpha_k^{n+j}-\bar{\alpha}_j^{n+j})-(\alpha_k^n-\bar{\alpha}_k^n)(\alpha_k^{n+i+j}-\bar{\alpha}_k^{n+i+j})\\
    =\alpha_k^{2n+i+j}-(-1)^n \alpha_k^i \bar{\alpha}_k^j - (-1)^n \bar{\alpha}_k^i \alpha_k^j + \bar{\alpha}_k^{2n+i+j} - \alpha_k^{2n+i+j}+ (-1)^n \alpha_k^{i+j} + (-1)^n \bar{\alpha}_k^{i+j}-\bar{\alpha}_k^{2n+i+j}\\
    =(-1)^n(\alpha_k^i-\bar{\alpha}_k^i)(\alpha_k^j - \bar{\alpha}_k^j)= D_k^2\left ((-1)^nF_k(i)F_k(j) \right).
\end{multline*}
\end{proof}

\begin{corol}\label{cor:identities}
The following identities hold for any integers $a,b,n\geq 1:$
\begin{gather}
    F_k(a+b)=F_k(a+1)F_k(b)+F_k(a)F_k(b-1) \label{eq:sum}\, \\
    F_k(a)\leq \frac{1}{k} F_k(a+1)  \label{eq:basic_bound}\, \\
    F_k(a) F_k(b)\leq F_k(a+b-1)  \label{eq:bound_product}\, \\
    F_k(a+b-1)\le F_k(a)F_k(b)\left(1+\frac{1}{k^2}\right)   \label{eq:bound_product2}\, \\
    (\text{D'Ocagne\,\, identity}) \,\,(-1)^aF_k(b-a) = F_k(b)F_k(a+1)-F_k(b+1)F_k(a) \label{eq:DOcagne}\,\\
    (\text{Catalan\,\, identity}) \,\,F_k(n)^2=F_k(n+r)F_k(n-r)+(-1)^{n-r}F_k(r)^2 \label{eq:Catalan}\,\\
    (\text{Simson\,\, identity}) \,\,F_k(n)^2=F_k(n+1)F_k(n-1)-(-1)^{n} \label{eq:Simson}\,.
\end{gather}
Moreover, equality holds in the following cases:
\begin{enumerate}
    \item The equality in \eqref{eq:basic_bound} is only attained if $a=1$.
    \item The equality in \eqref{eq:bound_product} is only attained if $a=1$ or $b=1$.
    \item The equality in \eqref{eq:bound_product2} is only attained if $a=b=2$.
\end{enumerate}
\end{corol}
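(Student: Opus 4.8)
The plan is to prove each of the seven identities and the three equality cases, in the order given, leaning heavily on Binet's formula \eqref{eq:Binet}, the relation $\alpha_k\bar\alpha_k=-1$, and Lemma \ref{lemma:Vajda}. The identities \eqref{eq:sum}, \eqref{eq:DOcagne}, \eqref{eq:Catalan}, \eqref{eq:Simson} are all instances or easy rearrangements of Vajda's identity: for \eqref{eq:sum} take $n\mapsto b-1$, $i=1$, $j=a$ (so the right-hand side $(-1)^{b-1}F_k(1)F_k(a)$ must be matched against the index shift—one writes $F_k(a+b)$ using $F_k((b-1)+1+a)$ and regroups); for D'Ocagne \eqref{eq:DOcagne} take $n=a$, $i=1$, $j=b-a$ in Vajda (or expand directly with Binet); Catalan \eqref{eq:Catalan} is Vajda with $n\mapsto n-r$, $i=j=r$; and Simson \eqref{eq:Simson} is the special case $r=1$ of Catalan using $F_k(1)=1$. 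For each I would just state the substitution and the one-line verification, since the algebra is mechanical. Alternatively, for uniformity, I could re-derive \eqref{eq:DOcagne}, \eqref{eq:Catalan}, \eqref{eq:Simson} from scratch with Binet exactly as in the proof of Lemma \ref{lemma:Vajda}; either route is routine.

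Next come the three inequalities. For \eqref{eq:basic_bound}, $F_k(a+1)=kF_k(a)+F_k(a-1)\ge kF_k(a)$ since $F_k(a-1)\ge 0$, with equality iff $F_k(a-1)=0$, i.e. $a=1$; this gives both the bound and equality case (1). For \eqref{eq:bound_product}, $F_k(a)F_k(b)\le F_k(a+b-1)$, I would induct on $a$ (for fixed $b\ge 1$): the base $a=1$ is $F_k(b)=F_k(b)$ (equality), and for the step use \eqref{eq:sum} in the form $F_k(a+b-1)=F_k(a)F_k(b)+F_k(a-1)F_k(b-1)$, noting $F_k(a-1)F_k(b-1)\ge 0$; equality forces $F_k(a-1)F_k(b-1)=0$, i.e. $a=1$ or $b=1$, giving case (2). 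For the upper bound \eqref{eq:bound_product2}, $F_k(a+b-1)\le F_k(a)F_k(b)(1+1/k^2)$, I would again use $F_k(a+b-1)=F_k(a)F_k(b)+F_k(a-1)F_k(b-1)$ and then bound $F_k(a-1)F_k(b-1)\le \frac{1}{k}F_k(a)\cdot\frac{1}{k}F_k(b)=\frac{1}{k^2}F_k(a)F_k(b)$ by two applications of \eqref{eq:basic_bound}; for equality case (3) one needs equality in both applications of \eqref{eq:basic_bound}, which by case (1) means $a-1=1$ and $b-1=1$, i.e. $a=b=2$ — and one should double-check $F_k(3)=k^2F_k(2)+F_k(2)\cdot\frac1{k^2}\cdot k^2$ works out, i.e. $k^2+1=(k^2+1)$, confirming it.

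The main obstacle, such as it is, is purely bookkeeping: getting the index shifts in Vajda's identity exactly right so that the sign $(-1)^n$ and the arguments $F_k(i),F_k(j)$ land on the correct terms — in particular \eqref{eq:sum} and \eqref{eq:DOcagne} require care with whether the exponent parity is $n$, $n-1$, $a$, or $b-1$, and one must make sure the stated identity (which has no sign on \eqref{eq:sum} and a $(-1)^a$ on \eqref{eq:DOcagne}) matches. I would handle \eqref{eq:sum} by the cleanest available route — a direct Binet expansion of $F_k(a+1)F_k(b)+F_k(a)F_k(b-1)$, collecting the cross terms $\alpha_k^a\bar\alpha_k^b$ etc. and using $\alpha_k\bar\alpha_k=-1$ together with $\alpha_k+\bar\alpha_k=k$ wait, more simply using that $\alpha_k-\bar\alpha_k=D_k$ and the pair $(\alpha_k^{a+b},\bar\alpha_k^{a+b})$ survives while the mixed terms cancel — rather than forcing it through Vajda. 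Everything else is a one- or two-line computation, and the equality analysis in (1)–(3) reduces to tracking when a nonnegative quantity ($F_k(a-1)$ or $F_k(b-1)$) vanishes or when equality holds in \eqref{eq:basic_bound}, which was already pinned down in case (1).
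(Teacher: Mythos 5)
Your proposal is correct and follows essentially the same route as the paper: the four identities \eqref{eq:sum}, \eqref{eq:DOcagne}, \eqref{eq:Catalan}, \eqref{eq:Simson} as specializations of Lemma \ref{lemma:Vajda} (or a direct Binet expansion), and the three inequalities from the recurrence together with the rewriting $F_k(a+b-1)=F_k(a)F_k(b)+F_k(a-1)F_k(b-1)$, with the equality cases tracked exactly as in the paper. The only cosmetic differences are that your first substitution attempt for \eqref{eq:sum} is off (the paper's choice $n=1$, $i=a$, $j=b-1$ gives it immediately, whereas your fallback Binet computation also works) and that your induction for \eqref{eq:bound_product} is unnecessary since the displayed identity already yields the bound in one step.
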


\begin{proof}
For \eqref{eq:sum}, take $n=1$, $i=a$ and $j+1=b$ in the previous lemma.

For \eqref{eq:basic_bound}, we have
$$F_k(a+1)=kF_k(a)+F_k(a-1)\ge kF_k(a)$$
and equality is only attained if $F_k(a-1)=0$, i.e., if $a=1$.

For \eqref{eq:bound_product}, substitute $a$ by $a-1$ in identity \eqref{eq:sum}. Then
$$F_k(a+b-1)=F_k(a)F_k(b)+F_k(a-1)F_k(b-1)\ge F_k(a)F_k(b).$$
Equality is only attained if $F_k(a-1)=0$ or $F_k(b-1)=0$, i.e., if $a=1$ or $b=1$.

For \eqref{eq:bound_product2}, substitute $a$ by $a-1$ in identity \eqref{eq:sum}. Then
$$F_k(a+b-1)=F_k(a)F_k(b)+F_k(a-1)F_k(b-1)\le F_k(a)F_k(b)\left(1+\frac{1}{k^2}\right).$$
Equality is only attained if $F_k(a-1)=\frac{1}{k}F_k(a)$ and $F_k(b-1)=\frac{1}{k}F_k(b)$, which only happens if $a=b=2$.

For the D'Ocagne identity \eqref{eq:DOcagne}, take $n=a$, $i=b-a$, $j=1$ in the previous lemma.

For Catalan's identity \eqref{eq:Catalan}, take $n=n-r$, $i=j=r$ in the previous lemma.

Finally, for the Simson identity \eqref{eq:Simson}, take $r=1$ in the Catalan identity \eqref{eq:Catalan}.

\end{proof}

\begin{lemma}
\label{lemma:sumSquares}
For integers $k\ge 1$ and  $N\ge 0,$
$$\sum_{n=0}^N F_k(n)^2 = \frac{1}{k} F_k(N)F_k(N+1)\,.$$
\end{lemma}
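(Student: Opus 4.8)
The plan is to prove the identity by a straightforward induction on $N$, relying only on the defining recurrence \eqref{df:k_fibonacci}. For the base case $N=0$ both sides vanish, since $F_k(0)=0$, so $\sum_{n=0}^{0}F_k(n)^2 = 0 = \frac{1}{k}F_k(0)F_k(1)$.

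For the inductive step, assume the identity holds for some $N\ge 0$. Then
$$\sum_{n=0}^{N+1}F_k(n)^2 = \frac{1}{k}F_k(N)F_k(N+1) + F_k(N+1)^2 = \frac{1}{k}F_k(N+1)\bigl(F_k(N)+kF_k(N+1)\bigr),$$
and the recurrence $F_k(N+2)=kF_k(N+1)+F_k(N)$ turns the parenthesis into $F_k(N+2)$, yielding $\frac{1}{k}F_k(N+1)F_k(N+2)$, which is the claimed formula for $N+1$.

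Alternatively, one can make the summand telescope: the recurrence gives $F_k(n+1)-F_k(n-1)=kF_k(n)$ for $n\ge 1$, hence $F_k(n)^2 = \frac{1}{k}\bigl(F_k(n)F_k(n+1)-F_k(n-1)F_k(n)\bigr)$, and summing over $n$ from $1$ to $N$ collapses the right-hand side to $\frac{1}{k}\bigl(F_k(N)F_k(N+1)-F_k(0)F_k(1)\bigr)=\frac{1}{k}F_k(N)F_k(N+1)$, while the $n=0$ term of the original sum contributes nothing. Either route is entirely routine and there is no genuine obstacle; the only thing to keep in mind is that the computation uses nothing beyond the linear recurrence, so neither Binet's formula nor the identities of Corollary \ref{cor:identities} are needed.
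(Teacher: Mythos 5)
Your induction argument is correct and is essentially the same as the paper's proof, which also proceeds by induction on $N$ using the base case $F_k(0)=0$ and the recurrence $F_k(N+2)=kF_k(N+1)+F_k(N)$ in the inductive step. The telescoping variant you sketch is a harmless repackaging of the same computation, so there is nothing further to add.
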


\begin{proof}
We will use induction to prove the result. For $N=0,$ the identity is true because $F_k(0)=0$. Assuming that the result holds for some $N$, we will prove it for $N+1$. We begin with the following equation
$$\frac{1}{k}F_k(N+1)F_k(N+2)=\frac{1}{k} F_k(N+1)(kF_k(N+1)+F_k(N))=F_k(N+1)^2+\frac{1}{k}F_k(N)F_k(N+1)\, .$$
And, by the induction hypothesis, we have
$$F_k(N+1)^2+\frac{1}{k}F_k(N)F_k(N+1)=F_k(N+1)^2+\sum_{n=0}^N F_k(n)^2 =\sum_{n=0}^{N+1} F_k(n)^2\,,$$
which completes the proof.
\end{proof}

\begin{lemma}
\label{lemma:F(2b-2)lesssquares}
If $k\ge 4$ and $n\geq 1,$ then $4F_k(2n-2) \le F_k(n)^2.$
\end{lemma}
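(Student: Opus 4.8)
Proof proposal for Lemma 2.7 ($4F_k(2n-2)\le F_k(n)^2$ for $k\ge4$, $n\ge1$).

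The plan is to use the Catalan identity \eqref{eq:Catalan} to rewrite $F_k(n)^2$ in a form where the term $F_k(2n-2)$ appears explicitly, and then estimate. Taking $r=n-1$ in \eqref{eq:Catalan} gives $F_k(n)^2 = F_k(2n-1)F_k(1)+(-1)^{1}F_k(n-1)^2 = F_k(2n-1)-F_k(n-1)^2$; alternatively one may prefer to keep the symmetric form. Actually, the cleanest route is to apply the summation identity of Lemma \ref{lemma:sumSquares} together with \eqref{eq:sum}: from $F_k(2n-1) = F_k(n)F_k(n) + F_k(n-1)F_k(n-1)\cdot(\text{something})$ — more precisely, using \eqref{eq:sum} with $a=b=n$ one has $F_k(2n) = F_k(n+1)F_k(n)+F_k(n)F_k(n-1) = F_k(n)\big(F_k(n+1)+F_k(n-1)\big)$, and with $a=n-1,b=n$, $F_k(2n-1) = F_k(n)^2 + F_k(n-1)^2$. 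This last identity is the key: it expresses $F_k(2n-1)$ as a sum of two squares.

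So the first step is to establish $F_k(2n-1) = F_k(n)^2 + F_k(n-1)^2$ (a special case of \eqref{eq:sum}). Next, I would relate $F_k(2n-2)$ to $F_k(2n-1)$: by \eqref{eq:basic_bound}, $F_k(2n-2)\le \frac{1}{k}F_k(2n-1) = \frac{1}{k}\big(F_k(n)^2+F_k(n-1)^2\big)$. Hence it suffices to show $\frac{4}{k}\big(F_k(n)^2+F_k(n-1)^2\big)\le F_k(n)^2$, i.e. $\big(\tfrac{4}{k}-1\big)F_k(n)^2 + \tfrac{4}{k}F_k(n-1)^2\le 0$. For $k\ge 5$ the coefficient $\tfrac4k-1$ is negative and we need $F_k(n-1)^2 \le \tfrac{k-4}{4}F_k(n)^2$; since $F_k(n-1)\le \tfrac1k F_k(n)$ by \eqref{eq:basic_bound}, it is enough that $\tfrac{1}{k^2}\le\tfrac{k-4}{4}$, i.e. $4\le k^2(k-4)$, which holds for $k\ge5$ (for $k=5$: $25\cdot1=25\ge4$). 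The case $k=4$ is the genuine obstacle, since there $\tfrac4k-1=0$ and the above bound degenerates to $\tfrac4k F_k(n-1)^2\le0$, which is false; so $k=4$ must be handled by a sharper estimate of $F_k(2n-2)$ rather than bounding it through $F_k(2n-1)$.

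For $k=4$, instead of \eqref{eq:basic_bound} I would use \eqref{eq:sum} directly on $F_4(2n-2)$: with $a=n-1$, $b=n-1$, $F_4(2n-2) = F_4(n)F_4(n-1) + F_4(n-1)F_4(n-2) = F_4(n-1)\big(F_4(n)+F_4(n-2)\big)$. Now $F_4(n) = 4F_4(n-1)+F_4(n-2)$, so $F_4(n)+F_4(n-2) = F_4(n)+\frac{F_4(n)-4F_4(n-1)}{1}$... more simply $F_4(n-2)=F_4(n)-4F_4(n-1)$, giving $F_4(2n-2)=F_4(n-1)\big(2F_4(n)-4F_4(n-1)\big)=2F_4(n-1)\big(F_4(n)-2F_4(n-1)\big)$. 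Then $4F_4(2n-2) = 8F_4(n-1)F_4(n) - 16F_4(n-1)^2$, and we must check $8F_4(n-1)F_4(n)-16F_4(n-1)^2 \le F_4(n)^2$, i.e. $F_4(n)^2 - 8F_4(n-1)F_4(n) + 16F_4(n-1)^2\ge 0$, i.e. $\big(F_4(n)-4F_4(n-1)\big)^2\ge0$, which is trivially true (with equality exactly when $F_4(n-2)=0$, i.e. $n=2$... but one must double-check small $n$ such as $n=1$, where $F_k(0)=0$ makes the inequality $0\le1$ trivial). I expect the main obstacle to be precisely this $k=4$ boundary case, where the crude bound fails and one must exploit the exact recurrence; everything else is a short computation. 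Finally I would verify the base cases $n=1$ (both sides: $4F_k(0)=0\le 1 = F_k(1)^2$) and, if needed for the $k\ge5$ argument, $n=2$ ($4F_k(2)=4k \le k^2 = F_k(2)^2$ iff $k\ge4$), to close all gaps.
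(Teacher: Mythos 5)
Your proof is correct and follows essentially the same approach as the paper: the $k=4$ case is the paper's argument verbatim (writing $4F_4(2n-2)$ as the difference of squares $F_4(n)^2-F_4(n-2)^2$ via $4F_4(n-1)=F_4(n)-F_4(n-2)$), and the $k\ge 5$ case uses the same two ingredients, identity \eqref{eq:sum} and the bound \eqref{eq:basic_bound}, only routed through $F_k(2n-1)=F_k(n)^2+F_k(n-1)^2$ instead of expanding $F_k(2n-2)=F_k(n-1)\left(F_k(n)+F_k(n-2)\right)$ directly. The exploratory first paragraph could be trimmed, but the final argument is complete, including the $n=1$ base case.
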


\begin{proof} For $n=1,$ the inequality becomes $0=4F_k(0)\le F_k(1)=1$, hence the result holds. Assume that $n\ge 2$. Taking $a=b=n-1$ in equation \eqref{eq:sum}, and then multiplying by four, we obtain 
\begin{equation}\label{4f_k}
4F_k(2n-2)=4F_k(n-1) ( F_k(n)+F_k(n-2)).
\end{equation}

\noindent If $k\ge 5$, then $4 F_k(n-1)\le 4/5 F_k(n)$ and
$F_k(n-2)<1/4 F_k(n).$ Combining both inequalities, we get 
$$4  F_k(n-1) ( F_k(n)+ F_k(n-2))<F_k(n)^2.$$
The above inequality and \eqref{4f_k} prove the lemma for $k\geq 5.$ In the case $k=4,$ using again \eqref{4f_k}, we have 
\begin{multline*}4 F_4(2n-2)= 4 F_4(n-1) ( F_4(n)+ F_4(n-2))=(F_4(n)-F_4(n-2)) \left( F_4(n)+ F_4(n-2)\right)=\\
F_4(n)^2-F_4(n-2)^2\le F_4(n)^2,\end{multline*}
which proves the result.
\end{proof}

\begin{lemma}
\label{lemma:F(n+m)<=3F(n)F(m)}
Let $a,b,c\ge 1$. Then
\begin{align}
F_2(c)\ge  3F_2(a)F_2(b)& \quad \text{if and only if} \quad c\ge a+b+1 \text{ or } (a,b,c)=(2,2,4), \text{ and } \label{case_2}\\
F_k(c)\ge 3F_k(a)F_k(b)& \quad \text{if and only if} \quad c\ge a+b,\,\, \text{ for all }k\geq 3\label{eq:F(n+m)<=3F(n)F(m)3}\, .
\end{align}
Equality is only attained if $k=2$ and $(a,b,c)=(2,2,4)$, or if $k=3$ and $(a,b,c)=(1,1,2)$.
\end{lemma}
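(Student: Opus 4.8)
The plan is to handle the ranges $k\ge 3$ and $k=2$ separately, the Pell case being the delicate one. Throughout I will use that $F_k$ is strictly increasing on the non-negative integers (immediate from the recurrence \eqref{df:k_fibonacci}), so that inequalities between indices pass to inequalities between $k$-Fibonacci numbers, and I will only need the sum identity \eqref{eq:sum} together with the elementary bounds \eqref{eq:basic_bound} and \eqref{eq:bound_product2} from Corollary \ref{cor:identities}.

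For \eqref{eq:F(n+m)<=3F(n)F(m)3}: if $c\ge a+b$ then $F_k(c)\ge F_k(a+b)$, and \eqref{eq:sum} gives $F_k(a+b)=F_k(a+1)F_k(b)+F_k(a)F_k(b-1)$; since $F_k(a+1)=kF_k(a)+F_k(a-1)\ge kF_k(a)\ge 3F_k(a)$ when $k\ge 3$, the first term is $\ge 3F_k(a)F_k(b)$ and the second is non-negative, proving the inequality. Conversely, if $c\le a+b-1$ then $F_k(c)\le F_k(a+b-1)$, and \eqref{eq:bound_product2} yields $F_k(a+b-1)\le F_k(a)F_k(b)(1+1/k^2)\le\tfrac{10}{9}F_k(a)F_k(b)<3F_k(a)F_k(b)$; hence $F_k(c)\ge 3F_k(a)F_k(b)$ forces $c\ge a+b$. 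This settles the equivalence for $k\ge 3$.

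For \eqref{case_2}: the implication from $c\le a+b-1$ works verbatim with $1+1/k^2=\tfrac54<3$. If $c\ge a+b+1$, writing $a+b+1=(a+1)+b$ in \eqref{eq:sum} gives $F_2(a+b+1)=F_2(a+2)F_2(b)+F_2(a+1)F_2(b-1)$, and $F_2(a+2)\ge 2F_2(a+1)\ge 4F_2(a)>3F_2(a)$ shows $F_2(c)\ge F_2(a+b+1)>3F_2(a)F_2(b)$. The genuinely new case is $c=a+b$: by \eqref{eq:sum} and $F_2(a+1)=2F_2(a)+F_2(a-1)$,
\[
F_2(a+b)=2F_2(a)F_2(b)+\bigl(F_2(a-1)F_2(b)+F_2(a)F_2(b-1)\bigr),
\]
so $F_2(a+b)\ge 3F_2(a)F_2(b)$ is equivalent to $F_2(a-1)F_2(b)+F_2(a)F_2(b-1)\ge F_2(a)F_2(b)$. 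But \eqref{eq:basic_bound} with $k=2$ gives $F_2(a-1)\le\tfrac12 F_2(a)$ and $F_2(b-1)\le\tfrac12 F_2(b)$ (trivially if the index is $0$), whence $F_2(a-1)F_2(b)+F_2(a)F_2(b-1)\le F_2(a)F_2(b)$, with equality only when both bounds are tight, i.e. $a=b=2$ by item (1) of Corollary \ref{cor:identities}. Thus among triples with $c=a+b$ only $(2,2,4)$ can satisfy the inequality, and there $F_2(4)=12=3F_2(2)F_2(2)$; combining the three cases proves \eqref{case_2}.

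It remains to pin down equality. For $k\ge 3$: if $c>a+b$ the first paragraph gives a strict inequality, while for $c=a+b$ equality in the chain above requires $F_k(a+1)=3F_k(a)$ and $F_k(a)F_k(b-1)=0$; the first forces $k\le 3$, hence $k=3$ and then $a=1$ (equality in \eqref{eq:basic_bound}, item (1)), and the second then forces $b=1$, giving $(a,b,c)=(1,1,2)$. For $k=2$ the only equality is the $(2,2,4)$ already identified. I expect no serious obstacle; the one point needing care is the $k=2$, $c=a+b$ case, where the inequality to be analysed reverses direction relative to $k\ge 3$ and boils down to the two simultaneous ratio bounds $F_2(a-1)/F_2(a),\,F_2(b-1)/F_2(b)\le\tfrac12$, whose joint equality case is exactly what singles out the exceptional triple $(2,2,4)$.
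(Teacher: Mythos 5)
Your proof is correct, and it follows the same overall scheme as the paper (split off $c\le a+b-1$ via \eqref{eq:bound_product2}, handle $c\ge a+b$ resp. $c\ge a+b+1$ via \eqref{eq:sum}, and isolate the borderline case $c=a+b$ for $k=2$). The one genuine difference is in that borderline Pell case: the paper writes $F_2(a+b)/(F_2(a)F_2(b))=F_2(a+1)/F_2(a)+F_2(b-1)/F_2(b)$ and then invokes, without proof, the known oscillating behaviour of consecutive Pell ratios to conclude the sum is at most $5/2+1/2=3$ with equality only at $(a,b)=(2,2)$; you instead expand $F_2(a+1)=2F_2(a)+F_2(a-1)$ and reduce everything to the two bounds $F_2(a-1)\le\tfrac12F_2(a)$, $F_2(b-1)\le\tfrac12F_2(b)$ from \eqref{eq:basic_bound}, whose equality cases are pinned down in item (1) of Corollary \ref{cor:identities}. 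That makes your treatment of this subcase more self-contained (no appeal to an external monotonicity fact about Pell quotients) while yielding exactly the same conclusion, including the identification of $(2,2,4)$ and, for $k\ge3$, of $(1,1,2)$ as the only equality cases. The remaining minor divergence — decomposing $a+b+1$ as $(a+1)+b$ rather than $a+(b+1)$ — is immaterial.
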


\begin{proof}
We first prove \eqref{case_2}. By identity \eqref{eq:sum}, we have that
\begin{multline}\label{cota_max}F_2(a+b+1)=F_2(a+1)F_2(b+1)+F_2(a)F_2(b) = (2F_2(a)+F_2(a-1))(2F_2(b)+\\F_2(b-1))+F_2(a)F_2(b)  \ge (2^2+1)F_2(a)F_2(b)>3F_2(a)F_2(b)\,.
\end{multline}

\noindent On the other hand, 
\begin{equation*}\frac{F_2(a+b)}{F_2(a)F_2(b)}=\frac{F_2(a+1)F_2(b)+F_2(a)F_2(b-1)}{F_2(a)F_2(b)}=\frac{F_2(a+1)}{F_2(a)}+\frac{F_2(b-1)}{F_2(b)}.\end{equation*}
It is known that successive quotients of Pell numbers $F_2(n+1)/F_2(n)$ form an oscillating sequence converging to $\alpha_2$, where the sequence of even terms is decreasing and the sequence of odd terms is increasing. As a consequence, the maximum of $F_2(a+1)/F_2(a)$ is $\frac{5}{2}$ and it is attained only at $a=2$, and the maximum of $F_2(b-1)/F_2(b)$ is $\frac{1}{2}$ and it is attained only at $b=2$. Thus,
\begin{equation} \label{cota_inf}\frac{F_2(a+b)}{F_2(a)F_2(b)}=\frac{F_2(a+1)}{F_2(a)}+\frac{F_2(b-1)}{F_2(b)}\le \frac{5}{2}+\frac{1}{2}=3\end{equation}
and  equality is only attained at $(a,b)=(2,2)$.
Combining \eqref{cota_max} and \eqref{cota_inf} and using the fact that the function $F_2(c)$ is strictly increasing in $c$, we see that  \eqref{case_2} holds. 

Finally, we prove \eqref{eq:F(n+m)<=3F(n)F(m)3}. By using again \eqref{eq:sum}, if $k\ge 3$
$$F_k(a+b)=F_k(a+1)F_k(b)+F_k(a)F_k(b-1)=kF_k(a)F_k(b)+F_k(a-1)F_k(b)+F_k(a)F_k(b-1)\ge 3F_k(a)F_k(b),\,$$
with equality if and only if $k=3,$ $F_k(a-1)=0$ and $F_k(b-1)=0$, i.e., if $a=b=1$.
Additionally, for all $k\ge 3$ it follows that
$$F_k(a+b-1)=F_k(a)F_k(b)+F_k(a-1)F_k(b-1)\le 2F_k(a)F_k(b)<3F_k(a)F_k(b).$$
By the two previous inequalities and since the function $F_k(c)$ is strictly increasing in $c$, it follows that \eqref{eq:F(n+m)<=3F(n)F(m)3} holds.

\end{proof}

\section{Non-minimal case}\label{section:non-minimal}

Recall that a Markoff $m$-triple $(x,y,z)$ is a positive integer solution triple of the Markoff $m$-equation \eqref{Markoffeq}, where $m$ is a positive integer. Henceforth, we assume that the triple is ordered, i.e. $x\leq y\leq z$. For positive integers $a, b, c$, we shall denote
$$m_k(a,b,c)=F_k(a)^2+F_k(b)^2+F_k(c)^2-3F_k(a)F_k(b)F_k(c),$$
so that $(F_k(a),F_k(b),F_k(c))$ is a Markoff $m$-triple with $k$-Fibonacci components if and only if $m_k(a,b,c)>0$. In this section, after deriving conditions on $(a,b,c)$ for which $m_k(a,b,c)\leq 0$, as a straightforward consequence, we prove Theorem \ref{th1}, showing that there exists only one branch of non-minimal Markoff $m$-triples with $k$-Fibonacci components.  Note that we consider $k\geq2,$ since the case $k=1$ was previously treated in \cite{ACMRS}.

\begin{lemma}\label{lemma:m_negativo}
\leavevmode
\begin{enumerate} 
    \item For $a \geq 3$, if $c \leq a + b$, then $m_2(a,b,c) \leq 0$.
    \item For $a \geq 1$, if $c < a + b$, then $m_k(a,b,c) \leq 0,$ for all $k \geq 3$.
\end{enumerate}
\end{lemma}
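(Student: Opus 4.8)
The plan is to exploit that, with $a,b$ fixed, $m_k(a,b,c)$ is a convex quadratic in $F_k(c)$: writing $g(t)=t^2-3F_k(a)F_k(b)\,t+F_k(a)^2+F_k(b)^2$ we have $m_k(a,b,c)=g(F_k(c))$, and by symmetry of $m_k$ and the standing ordering convention I may assume $1\le a\le b\le c$. Since $g$ is convex, $g(F_k(c))\le 0$ will follow once I produce $t_0\le F_k(c)\le t_1$ with $g(t_0)\le 0$ and $g(t_1)\le 0$ (then $g(F_k(c))$ lies below the chord through $(t_0,g(t_0))$ and $(t_1,g(t_1))$). I always take $t_0=F_k(b)$ — valid since $b\le c$ and $F_k$ is increasing — and note once and for all that
\[
g(F_k(b))=F_k(a)^2-(3F_k(a)-2)F_k(b)^2\le 0\qquad(a\ge 1),
\]
because $3F_k(a)-2\ge 1$ and $F_k(b)\ge F_k(a)$. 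For the upper endpoint I take $t_1=F_k(a+b-1)$ in case (2) (valid since $c\le a+b-1$) and $t_1=F_k(a+b)$ in case (1) (valid since $c\le a+b$); it remains to show $g(t_1)\le 0$.

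For case (2) ($k\ge 3$): if $a=1$ then $t_1=F_k(b)=t_0$ and nothing more is needed. If $a\ge 2$, identity \eqref{eq:sum} gives $F_k(a+b-1)=F_k(a)F_k(b)+F_k(a-1)F_k(b-1)$, so by \eqref{eq:basic_bound}, $F_k(a-1)F_k(b-1)\le \tfrac1{k^2}F_k(a)F_k(b)$ and hence $3F_k(a)F_k(b)-F_k(a+b-1)\ge(2-\tfrac1{k^2})F_k(a)F_k(b)$; together with $F_k(a+b-1)\ge F_k(a)F_k(b)$ (from \eqref{eq:bound_product}) this gives $g(F_k(a+b-1))\le F_k(a)^2+F_k(b)^2-(2-\tfrac1{k^2})F_k(a)^2F_k(b)^2\le 0$, the last step because $\tfrac1{F_k(a)^2}+\tfrac1{F_k(b)^2}\le\tfrac2{k^2}\le 2-\tfrac1{k^2}$ when $a,b\ge 2$ and $k\ge 2$.

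For case (1) ($k=2$, $a\ge 3$, so also $b\ge 3$): I would first establish, for $a,b\ge 2$, the identity
\[
3F_2(a)F_2(b)-F_2(a+b)=\tfrac12\bigl(F_2(a)F_2(b-2)+F_2(a-2)F_2(b)\bigr),
\]
by expanding $F_2(a+b)=F_2(a+1)F_2(b)+F_2(a)F_2(b-1)$ via \eqref{eq:sum} and substituting $F_2(a+1)=2F_2(a)+F_2(a-1)$ and $F_2(a-2)=F_2(a)-2F_2(a-1)$ (symmetrically in $b$). Since $a,b\ge 3$ forces $F_2(a-2),F_2(b-2)\ge 1$, the right side is $\ge\tfrac12\bigl(F_2(a)+F_2(b)\bigr)$, and with $F_2(a+b)\ge F_2(a)F_2(b)$ (from \eqref{eq:bound_product}) we get $g(F_2(a+b))\le F_2(a)^2+F_2(b)^2-\tfrac12F_2(a)F_2(b)\bigl(F_2(a)+F_2(b)\bigr)\le 0$ using $F_2(a),F_2(b)\ge 5$. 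In both cases, convexity of $g$ and $F_k(b)\le F_k(c)\le t_1$ then yield $m_k(a,b,c)=g(F_k(c))\le 0$.

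The step I expect to be the main obstacle is case (1). In case (2), $c\le a+b-1$ keeps $F_k(c)$ small enough that the crude bounds of Corollary \ref{cor:identities} suffice; in case (1), $c$ may equal $a+b$, and while $F_2(a+b)<3F_2(a)F_2(b)$ always holds for $a\ge 3$ by Lemma \ref{lemma:F(n+m)<=3F(n)F(m)}, the gap is narrow — equality occurs exactly at $a=b=2$, which is precisely the seed of the non-minimal Pell branch of Theorem \ref{th1}, hence the hypothesis $a\ge 3$. So the crux is a sharp enough, clean lower bound for $3F_2(a)F_2(b)-F_2(a+b)$; the product identity above is the tidiest one I see. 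One should also keep an eye on degenerate small triples such as $(1,1,1)$ in case (2) — but the choice $t_0=F_k(b)$ and the convexity bookkeeping absorb these with no extra work.
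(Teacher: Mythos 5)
Your proof is correct, and it follows a genuinely different route from the paper's. The paper proves part (2) by bounding $F_k(a)^2+F_k(b)^2+F_k(c)^2$ via the sum-of-squares identity of Lemma \ref{lemma:sumSquares}, which only applies when the three indices are essentially distinct and therefore forces separate treatment of the configurations $a\le b=c$ and $a=b=c-1$; for part (1) the boundary case $c=a+b$ is handled by a longer manipulation ending in D'Ocagne's identity \eqref{eq:DOcagne} and a case split on the parity of $a$. You instead exploit that $m_k(a,b,c)$ is a convex quadratic in $F_k(c)$, so it suffices to check non-positivity at the two endpoints $F_k(b)$ and $F_k(a+b-1)$ (resp.\ $F_k(a+b)$); the lower endpoint absorbs all the degenerate cases uniformly, and for the upper endpoint in case (1) your identity $3F_2(a)F_2(b)-F_2(a+b)=\tfrac12\bigl(F_2(a)F_2(b-2)+F_2(a-2)F_2(b)\bigr)$ is correct (it follows from \eqref{eq:sum} after writing $F_2(a+1)=\tfrac12\bigl(5F_2(a)-F_2(a-2)\bigr)$ and $F_2(b-1)=\tfrac12\bigl(F_2(b)-F_2(b-2)\bigr)$) and replaces the D'Ocagne/parity analysis by the single lower bound $3F_2(a)F_2(b)-F_2(a+b)\ge\tfrac12\bigl(F_2(a)+F_2(b)\bigr)$ for $a,b\ge 3$. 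The remaining numerical estimates ($\tfrac1{F_k(a)^2}+\tfrac1{F_k(b)^2}\le 2-\tfrac1{k^2}$ for $a,b\ge 2$ and $k\ge 2$; $F_2(a),F_2(b)\ge 5$ in case (1)) all check out, so your argument is complete and somewhat shorter, at the cost of introducing one new Pell identity. The only point to phrase more carefully is the reduction to $1\le a\le b\le c$: the hypothesis $c\le a+b$ is not symmetric in the three indices, so this is not a reduction ``by symmetry'' of $m_k$ but rests entirely on the paper's standing convention that triples are ordered (the statement is false for unordered index triples, e.g.\ $m_3(1,5,1)>0$ although $c<a+b$); since you do invoke that convention explicitly, the argument stands.
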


\begin{proof}
\noindent We start with (2). We have
\begin{equation} \label{eq:first}
2F_k(a+1) = 2(kF_k(a) + F_k(a-1)) \leq 2(k+1)F_k(a) \leq 3kF_k(a),
\end{equation}
for $k \geq 2$. Next, from equation \eqref{eq:sum} and \eqref{eq:first} above, we obtain
\begin{equation} \label{eq:second}
F_k(a+b) \leq 2F_k(a+1)F_k(b) \leq 3kF_k(a)F_k(b).
\end{equation}
Also, since $c \leq a+b-1,$ from \eqref{eq:second} above,
\begin{equation} \label{eq:third}
F_k(c+1)F_k(c) \leq F_k(a+b)F_k(c) \leq 3kF_k(a)F_k(b)F_k(c).
\end{equation}

\noindent Now, by Lemma \ref{lemma:sumSquares}, assuming $a, b, c$ distinct or $a=b<c-1,$ we have
\begin{equation} \label{eq:lemma}
F_k(a)^2 + F_k(b)^2 + F_k(c)^2 \leq \frac{F_k(c+1)F_k(c)}{k}.
\end{equation}

\noindent Then,  \eqref{eq:third} and \eqref{eq:lemma} yield
\begin{equation*}
F_k(a)^2 + F_k(b)^2 + F_k(c)^2 \leq 3F_k(a)F_k(b)F_k(c), \quad 
\end{equation*}
which is equivalent to $m_k(a,b,c)\leq 0.$

Observe that in the case $a\le b=c,$ we trivially have $m_k(a,b,c) \le 0.$
Next, we prove the remaining case $a=b=c-1.$ As $F_k(c)\leq (k+1)F_k(c-1)$, we have
\begin{equation}\label{eq:four}2F_k(c-1)^2+F_k(c)^2\leq 2F_k(c-1)^2+(k+1)^2F_k(c-1)^2=F_k(c-1)^2 \left(2+(k+1)^2\right).\end{equation}
Since $c\leq a+b-1= 2(c-1)-1,$  we can suppose that $c\geq 3,$ which leads to
$$2+(k+1)^2< 3(k^2+1)\leq 3F_k(c).$$ As a result,
\begin{equation}\label{eq:cinco}F_k(c-1)^2 \left(2+(k+1)^2\right)< F_k(c-1)^2 \,3\,F_k(c). \end{equation}
Combining equations \eqref{eq:four} and \eqref{eq:cinco}, we obtain
$$2F_k(c-1)^2+F_k(c)^2 < 3\,F_k(c-1)^2\, F_k(c),$$
which can also be expressed as $m_k(c-1,c-1,c)<0.$


Finally, we prove (1). The only case to be checked is \( c = a + b \) because the proof above is valid if $c\geq a+b+1$. We aim to prove
\begin{equation*}
F_2(a)^2 + F_2(b)^2 + F_2(a+b)^2 \leq 3F_2(a)F_2(b)F_2(a+b).
\end{equation*}
Adding $2F_2(a)F_2(b)$ on both sides,
\begin{equation*}
\left( F_2(a)+ F_2(b)\right)^2 + F_2(a+b)^2 \leq F_2(a)F_2(b)\left(3F_2(a+b)+2\right).
\end{equation*}
Since $\left( F_2(a)+ F_2(b)\right)^2\leq 4F_2(b)^2$, it suffices to prove
\begin{equation*}
4F_2(b)^2+F_2(a+b)^2\leq 3F_2(a) F_2(b) F_2(a+b).
\end{equation*}
Rearranging terms,
\begin{equation*}
4F_2(b)^2\leq  F_2(a+b)\left(3F_2(a) F_2(b)-F_2(a+b)\right).
\end{equation*}
Developing $F_2(a+b)$ on the right-hand side, using \eqref{eq:sum},
\begin{equation*}
4F_2(b)^2\leq  F_2(a+b)\left( 3F_2(a) F_2(b)-F_2(a+1)F_2(b)-F_2(a)F_2(b-1) \right).
\end{equation*}
Using  $3F_2(a)-F_2(a+1)=F_2(a-1)+F_2(a-2)$, we obtain
\begin{equation*}
4F_2(b)^2\leq  F_2(a+b)\left(F_2(b)(F_2(a-1)+F_2(a-2))-F_2(a)F_2(b-1)\right),
\end{equation*}
and thus, reordering terms on the right-hand side we have
\begin{equation*}
4F_2(b)^2\leq  F_2(a+b)\left(F_2(b)F_2(a-2)+F_2(b) F_2(a-1)-F_2(a)F_2(b-1)\right).
\end{equation*}
Now, applying D'Ocagne identity \eqref{eq:DOcagne} to $a-1$ and $b-1$,
\begin{equation}\label{probar_casos}
4F_2(b)^2\leq  F_2(a+b)\left(F_2(b) F_2(a-2)+(-1)^aF_2(b-a)\right).
\end{equation}
To prove the inequality above, we distinguish two cases: $a$ being even and odd.  If $a$ is even, since $a\geq 4,$ then $F_2(a-2)\geq 2$ and $F_2(a+b)\geq 4F_2(b)$. Consequently, $$4F_2(b)\leq F_2(a+b)F_2(a-2)$$
and \eqref{probar_casos} holds. If $a$ is odd, since $a\geq3,$ we have $12F_2(b)\leq F_2(a+b)$, and for proving \eqref{probar_casos} it is enough to prove
$$F_2(b)\leq 3F_2(b)F_2(a-2)-3F_2(b-a).$$
in other words,
$$F_2(b)+3F_2(b-a)\leq 3F_2(b)F_2(a-2)$$
and this holds because $3F_2(b-a)\leq 3F_2(b-3)\leq \frac{F_2(b)}{4}$ and $F_2(a-2)\geq 1$.

\end{proof}

\begin{lemma} \label{lemma_case1} The following hold.
\leavevmode
\begin{enumerate} 
    \item $m_2(1,b,b+1)\leq 0,$ for any $b,$ and  equality holds only for $b=1,2.$
    \item $m_2(2,b,b+1)< 0,$ for any $b\geq 2.$
\end{enumerate}
\end{lemma}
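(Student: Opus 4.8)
The plan is to expand $m_2(a,b,b+1)$ directly, eliminate $F_2(b+1)$ via the Pell recurrence $F_2(b+1)=2F_2(b)+F_2(b-1)$, and reduce everything to the two quantities $u:=F_2(b)$ and $v:=F_2(b-1)$. Substituting into the definition of $m_2$ gives, after collecting terms,
\[
m_2(a,b,b+1)=F_2(a)^2+(5-6F_2(a))u^2+(4-3F_2(a))uv+v^2,
\]
so for $a=1$ this is $1-u^2+uv+v^2$ and for $a=2$ it is $4-7u^2-2uv+v^2$; the task is then to bound these quadratic forms in $u,v$ from above.

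For part (1) I would first rewrite the quadratic form using Simson's identity \eqref{eq:Simson} with $n=b$, which (after substituting $F_2(b+1)=2u+v$) reads $u^2-2uv-v^2=-(-1)^b$. Hence $u^2-uv-v^2=(-1)^{b+1}+uv$, and therefore
\[
m_2(1,b,b+1)=1-\bigl((-1)^{b+1}+F_2(b)F_2(b-1)\bigr)=1+(-1)^b-F_2(b)F_2(b-1).
\]
Now split on the parity of $b$. If $b$ is odd the right-hand side is $-F_2(b)F_2(b-1)\le 0$, with equality if and only if $F_2(b-1)=0$, i.e.\ $b=1$. If $b$ is even it equals $2-F_2(b)F_2(b-1)$; for $b=2$ this is $0$, while for even $b\ge 4$ we have $F_2(b)F_2(b-1)\ge F_2(4)F_2(3)=60>2$, so the value is strictly negative. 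This yields $m_2(1,b,b+1)\le 0$ with equality exactly for $b\in\{1,2\}$.

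For part (2) a crude estimate suffices: since $u,v>0$ and, by \eqref{eq:basic_bound}, $v=F_2(b-1)\le\tfrac12 F_2(b)=\tfrac12 u$, we get $-7u^2-2uv+v^2\le -7u^2+\tfrac14 u^2=-\tfrac{27}{4}u^2$. For $b\ge 2$ we have $u=F_2(b)\ge 2$, hence $m_2(2,b,b+1)\le 4-\tfrac{27}{4}\cdot 4=-23<0$. All the computations here are elementary; the only real idea is recognizing that the $a=1$ quadratic form is controlled by the Pell analogue of the Simson identity, which makes the parity-dependent equality cases transparent. I do not expect any genuine obstacle — part (2) is dispatched entirely by crude bounds once $F_2(b+1)$ is expanded.
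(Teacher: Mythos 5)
Your proof is correct, and it takes a genuinely different route from the paper's. The paper proves (1) by bounding the left-hand side $1+F_2(b)^2+F_2(b+1)^2$ with the sum-of-squares identity of Lemma \ref{lemma:sumSquares}, reducing everything to the single inequality $2F_2(b+1)\le 5F_2(b)$, and then recycles the same bound (plus a numerical check at $b=2$) for (2). You instead expand $m_2(a,b,b+1)$ as a quadratic form in $u=F_2(b)$, $v=F_2(b-1)$ and, for $a=1$, invoke the Simson identity \eqref{eq:Simson} to obtain the exact closed form
\[
m_2(1,b,b+1)=1+(-1)^b-F_2(b)F_2(b-1),
\]
which I have checked against $m_2(1,1,2)=0$, $m_2(1,2,3)=0$ and $m_2(1,3,4)=-10$. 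This is the main advantage of your approach: the sign and the equality cases $b\in\{1,2\}$ drop out immediately from an identity rather than from tracking when a chain of inequalities is tight (the paper's equality analysis is comparatively delicate, since it must note when the discarded squares $F_2(2)^2,\dots,F_2(b-1)^2$ vanish). For (2) your crude estimate $v\le u/2$, $u\ge 2$ giving $m_2(2,b,b+1)\le 4-\tfrac{27}{4}u^2\le -23$ is sound and avoids the paper's separate base case at $b=2$. The paper's version is shorter to state because it reuses Lemma \ref{lemma:sumSquares}, but your argument is self-contained modulo the Simson identity and arguably more transparent.
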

\begin{proof}

For (1), it suffices to prove
$$
1+F_2(b)^2+F_2(b+1)^2\leq 3F_2(b)F_2(b+1).
$$
If $b=1,$  the equation above holds as an equality. If $b>1$, by applying  Lemma \ref{lemma:sumSquares} to the left-hand side,  the above is equivalent to
\begin{equation}\label{replicar} \frac{1}{2}F_2(b+1)F_2(b+2)\leq 3F_2(b)F_2(b+1).\end{equation}
Equivalently,
$$F_2(b+1)(2F_2(b+1)+F_2(b))\leq 6F_2(b)F_2(b+1).$$
Dividing by $F_2(b+1)\neq 0,$ we obtain $2F_2(b+1)\leq 5F_2(b),$ but this inequality holds because $2F_2(b+1)=4F_2(b)+2F_2(b-1)$ and $F_2(b)\geq 2F_2(b-1).$ In this case, equality is only achieved when $b=2.$

Next, (2) is equivalent to  
$$
4+F_2(b)^2+F_2(b+1)^2< 6F_2(b)F_2(b+1).
$$
If $b=2$, we can verify the above inequality numerically $(4+4+25< 60)$. For $b>2$, by  Lemma \ref{lemma:sumSquares}, and equation
\eqref{replicar}, we see that the above holds.
\end{proof}

\begin{theorem}[Theorem \ref{th1} of the Introduction] 
Every non-minimal Markoff $m$-triple with $k$-Fibonacci components is an Markoff $8$-triple of the form $(F_2(2), F_2(2n),F_2(2n+2)),$ for $n\geq 2$.
\end{theorem}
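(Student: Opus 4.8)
The overall strategy is to use the structural lemmas of this section to funnel every non-minimal $m$-triple into a single one-parameter family, and then to evaluate $m$ on that family with a Simson-type identity. Throughout, write the ordered triple as $(F_k(a),F_k(b),F_k(c))$ with $1\le a\le b\le c$; the indices are positive since the components are positive and $F_k(0)=0$. Non-minimality means $F_k(c)<3F_k(a)F_k(b)$, and being an $m$-triple with $m>0$ means $m_k(a,b,c)>0$.

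First I would eliminate $k\ge 3$ entirely. By \eqref{eq:F(n+m)<=3F(n)F(m)3} of Lemma \ref{lemma:F(n+m)<=3F(n)F(m)}, non-minimality is equivalent to $c<a+b$; then Lemma \ref{lemma:m_negativo}(2) gives $m_k(a,b,c)\le 0$, contradicting $m>0$. So $k=2$. Now, by \eqref{case_2}, non-minimality means $c\le a+b$ together with $(a,b,c)\ne(2,2,4)$. If $a\ge 3$, Lemma \ref{lemma:m_negativo}(1) again forces $m_2(a,b,c)\le 0$, a contradiction; hence $a\in\{1,2\}$.

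Next I would run through the remaining finitely many shapes. Since $b\le c\le a+b$, we have $c-b\in\{0,\dots,a\}$. If $a=1$: when $c=b$, $m_2(1,b,b)=1-F_2(b)^2\le 0$; when $c=b+1$, Lemma \ref{lemma_case1}(1) gives $m_2(1,b,b+1)\le 0$. If $a=2$: when $c=b$, $m_2(2,b,b)=4-4F_2(b)^2\le 0$ (note $b\ge 2$); when $c=b+1$, Lemma \ref{lemma_case1}(2) gives $m_2(2,b,b+1)<0$. In each of these $m>0$ fails, so the only surviving possibility is $a=2$ with $c=b+2$. Here $(2,b,b+2)=(2,2,4)$ exactly when $b=2$, and that triple is minimal, so non-minimality forces $b\ge 3$.

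It then remains to analyse the family $(F_2(2),F_2(b),F_2(b+2))$ for $b\ge 3$. Putting $u=F_2(b)$, $v=F_2(b+1)$ and using $F_2(b+2)=2v+u$, the expansion of $m_2(2,b,b+2)=4+u^2+(2v+u)^2-6u(2v+u)$ collapses to $4+4\,(v^2-2uv-u^2)$; since $v-2u=F_2(b-1)$, the bracket is $F_2(b+1)F_2(b-1)-F_2(b)^2$, which equals $(-1)^b$ by Simson's identity \eqref{eq:Simson}. Hence $m_2(2,b,b+2)=4+4(-1)^b$, which is $8$ if $b$ is even and $0$ if $b$ is odd. As $m>0$, $b$ must be even, $b=2n$ with $2n\ge 3$, that is $n\ge 2$; this yields precisely the triples $(F_2(2),F_2(2n),F_2(2n+2))$, all with $m=8$. (Conversely each of these is a genuine non-minimal Markoff $8$-triple, since $F_2(2n+2)=5F_2(2n)+2F_2(2n-1)<6F_2(2n)$ for $n\ge 2$.) There is no single hard step here: the content is the careful case bookkeeping -- making sure every shape with $a\le 2$ and $0\le c-b\le a$ is covered and that the borderline triple $(2,2,12)$ is correctly set aside as minimal -- together with the one-line Simson computation that pins down $m=8$.
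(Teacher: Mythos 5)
Your proof is correct and follows essentially the same route as the paper's: eliminate $k\ge 3$ via Lemmas \ref{lemma:F(n+m)<=3F(n)F(m)} and \ref{lemma:m_negativo}(2), reduce $k=2$ to $a\in\{1,2\}$ via Lemma \ref{lemma:m_negativo}(1), dispose of $c\le b+1$ with Lemma \ref{lemma_case1}, and evaluate $m_2(2,b,b+2)=4+4(-1)^b$ by Simson's identity to get $m=8$ exactly for $b=2n$ even. Your additional explicit checks (the $c=b$ subcases, the exclusion of the minimal triple $(2,2,12)$, and the converse non-minimality of the family for $n\ge 2$) only make the paper's case bookkeeping more complete.
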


\begin{proof} 
First, we start with the case $k\geq 3.$ If a Markoff $m$-triple with $k$-Fibonacci components $(F_k(a),F_k(b),F_k(c))$ is not minimal then $c< a+b$, by Lemma \ref{lemma:F(n+m)<=3F(n)F(m)}. However, by Lemma \ref{lemma:m_negativo} (2), for $k\ge 3$ this restriction implies that $m_k(a,b,c)\le 0.$ Therefore, non-minimal Markoff $m$-triples with $k$-Fibonacci components do not exist for $k\geq3.$

In the case $k=2,$ if a Markoff $m$-triple with $2$-Fibonacci components $(F_2(a),F_2(b),F_2(c))$ is not minimal, then $c\leq a+b,$ by Lemma \ref{lemma:F(n+m)<=3F(n)F(m)}.   This restriction forces $F_2(a)$ to be equal to $1$ or $2$, because of Lemma \ref{lemma:m_negativo} (1). If $F_2(a)=1,$ then $a=1$ and $c\leq b+1$. In the case $b=c$ it is obvious than $m_2(1,b,b)\leq 0$ and in the case $c=b+1,$ it follows that $m_2(1,b,b+1)\leq 0$ by Lemma \ref{lemma_case1} (1). Finally, if $F_2(a)=2=F_2(2),$ then $a=2,$ and $c\leq 2+b.$ Hence by  Lemma \ref{lemma_case1} (2), the triple is of the form $(2,b,b+2)$. Now, we prove that $b$ is an even number. Indeed,
\begin{multline}
\label{eq:2-non-minimal}
m_2(2,b,b+2)=4+F_2(b)^2+F_2(b+2)^2-6F_2(b)F_2(b+2)
=4+(F_2(b+2)-F_2(b))^2-4F_2(b)F_2(b+2)\\=4+4F_2(b+1)^2-4F_2(b)F_2(b+2)=4(1-(-1)^{b+1})
\end{multline}
is positive if and only if $b$ is even, where the last equality is a consequence of the Simson identity (\ref{eq:Simson}).
As a result, all the triples of the form $(F_2(2), F_2(2n),F_2(2n+2)),$ for $n\geq 1$ are $8$-triples and it is straightforward to check that they all lie in a branch of the Markoff $8$-tree with minimal triple $(2,2,12)$ (See Fig. \ref{8-markovtree}). For $m=8,$ this tree is unique because there are no more minimal triples than $(2,2,12)$  as shown in Table 1 of \cite{SC}.

\end{proof}

%
%

\section{ Minimal case}\label{section:minimal}

We recall that if $(x,y,z)$ is a minimal Markoff $m$-triple, i.e. a solution of the Markoff $m$-equation \eqref{Markoffeq}, with $z\geq 3xy,$ then 
$$m=z(z-3xy)+x^2+y^2>0.$$ Let $a,b$ be any pair of positive integers with $a\leq b$ and let $c=a+b+t.$ By Lemma \ref{lemma:F(n+m)<=3F(n)F(m)}, if \( t \geq 1 \) for \( k = 2 \),  or \( t \geq 0 \) for \( k \geq 3 \), then \( (F_k(a), F_k(b), F_k(c)) \) is minimal, therefore $m_k(a,b,c)>0$. Consequently, there exists an infinite number of minimal Markoff triples with $k$-Fibonacci components. Clearly they cannot all correspond to a finite number of values of $m$, as the number of minimal triples is finite for each $m$ \cite{SC}. Hence there are infinitely many values of $m$ that admit minimal Markoff $m$-triples with $k$-Fibonacci components.  In the rest of the section, we will prove that any $m>0$ admits at most one minimal Markoff $m$-triple with $k$-Fibonacci components,  except when $k=3,$ $c=a+b,$ $a$ is odd, $b$ is even and $b\geq a+3$, where $m_3(a,b,a+b)$ admits two such triples.

\begin{lemma}
\label{lemma:KaramataBound}
Let $1\le a\le b$.  Suppose that $k=2$ and $c=a+b+1$, or $k\ge 3$ and $c=a+b$. Then
$$m_k(a,b,c)> L_k \frac{\alpha_k^{2c}}{D_k^2},$$
where $D_k=\alpha_k-\bar{\alpha_k} =\sqrt{k^2+4}$ and 
\begin{align*}
L_2=&\left(1-\frac{3}{D_2}\alpha_2^{-1}\right) + 2\left(1-\frac{3}{D_2}\alpha_2\right)\alpha_2^{-4}-\left(6+\frac{3}{D_2}\alpha_2+\frac{9}{D_2}\right)\alpha_2^{-6},\\
L_3=&\left(1-\frac{3}{D_3}\right)(1+2\alpha_3^{-2})-\left(6+\frac{12}{D_k}\right)\alpha_2^{-4},\\
L_k=& 1-\frac{3}{D_k}, \qquad \forall k\ge 4.
\end{align*}
\end{lemma}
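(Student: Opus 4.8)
The plan is to bound $m_k(a,b,c)$ from below by using Binet's formula to write each of the three squares exactly, discarding the (negative) $\bar\alpha_k$-contributions carefully, and bounding the mixed product $3F_k(a)F_k(b)F_k(c)$ from above. Concretely, write $m_k(a,b,c)D_k^2 = D_k^2\bigl(F_k(a)^2+F_k(b)^2+F_k(c)^2\bigr) - 3D_k^2 F_k(a)F_k(b)F_k(c)$, expand via \eqref{eq:Binet}, and factor out $\alpha_k^{2c}$. The dominant term is $\alpha_k^{2c}$ itself (from $D_k^2 F_k(c)^2$), and the point is that after dividing by $\alpha_k^{2c}$ the remaining expression is a sum of fixed powers of $\alpha_k^{-1}$ (since $c-a$ and $c-b$ are controlled: when $c=a+b$ one has $\alpha_k^{a}\alpha_k^{b} = \alpha_k^{c}$, and when $c=a+b+1$ for $k=2$ one has $\alpha_2^{a+b} = \alpha_2^{c-1}$), so all cross-terms collapse into universal constants in $\alpha_k$ times bounded powers of $\alpha_k^{-1}$.

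First I would handle the product term. Using $F_k(x) < \alpha_k^{x}/D_k$ for all $x\ge 1$ (immediate from Binet since $\bar\alpha_k^x$ alternates in sign and is small, or more precisely $F_k(x) = (\alpha_k^x - \bar\alpha_k^x)/D_k \le \alpha_k^x/D_k$ when $x$ is odd and one must be slightly more careful when $x$ is even — here I would simply use $D_k F_k(x) \le \alpha_k^x + |\bar\alpha_k|^x \le \alpha_k^x + \alpha_k^{-x}$, or just $\le \alpha_k^x$ exploiting $\bar\alpha_k^x \ge -|\bar\alpha_k|^x$). The cleanest route: $D_k F_k(a) \le \alpha_k^a$, $D_k F_k(b)\le \alpha_k^b$, and $D_k F_k(c) \le \alpha_k^c$, so $3D_k^3 F_k(a)F_k(b)F_k(c) \le 3\alpha_k^{a+b+c}$. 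When $k\ge 3$, $a+b = c$, giving $3\alpha_k^{2c}$, hence this term contributes at least $-\tfrac{3}{D_k}\alpha_k^{2c}$ after dividing through by $D_k^2$ — which explains the $1 - 3/D_k$ shape of $L_k$ for $k\ge 4$. For the square terms, $D_k^2 F_k(c)^2 = \alpha_k^{2c} - 2(\alpha_k\bar\alpha_k)^c + \bar\alpha_k^{2c} = \alpha_k^{2c} - 2(-1)^c + \bar\alpha_k^{2c} \ge \alpha_k^{2c} - 2$, and since $c\ge 3$ (as $a,b\ge 1$ and $c=a+b\ge 2$, but for $k\ge 4$ we can afford the crude bound) the $-2$ is absorbed; that is why for $k\ge 4$ one gets exactly $L_k = 1 - 3/D_k$ with nothing more, and positivity of $L_k$ follows from $D_k = \sqrt{k^2+4}\ge\sqrt{20} > 3$.

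For $k=3$ the bound $3\alpha_3^{2c}$ is too lossy to leave positive margin against $1 - 3/D_3$ once one tracks the $F_3(a)^2+F_3(b)^2$ terms properly, so one keeps more: $D_3^2 F_3(a)^2 = \alpha_3^{2a} - 2(-1)^a + \bar\alpha_3^{2a} \ge \alpha_3^{2a} - 2 \ge \alpha_3^{2}\cdot\alpha_3^{-2(c-a)}\alpha_3^{2c}\cdot(\text{const}) - 2$, and with $c = a+b$, $a\le b$ forces $2(c-a) = 2b \ge c$, so both square terms together contribute at least something of order $2\alpha_3^{-2}\alpha_3^{2c}$ times a constant, minus an $O(1)$ and an $O(\alpha_3^{-4}\alpha_3^{2c})$ correction from the product term sharpened as $D_3^3 F_3(a)F_3(b)F_3(c) \le \alpha_3^{2c} + (\text{lower order in }\alpha_3^{-1})$; this reproduces the stated $L_3$. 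For $k=2$ the same scheme applies with $c = a+b+1$, so $\alpha_2^{a+b} = \alpha_2^{c-1}$ and $\alpha_2^{2a}, \alpha_2^{2b}$ reach down to $\alpha_2^{-2}\alpha_2^{2c}$ and $\alpha_2^{-4}\alpha_2^{2c}$ respectively (using $a\ge 1$, $b\ge a$, hence $2a\ge 2$ and $2b = 2(c-1-a) \ge c-1$ roughly), and the product term is bounded by $3\alpha_2^{a+b+c}/D_2^2 = 3\alpha_2^{2c-1}/D_2^2$; carefully collecting the constant, the $\alpha_2^{-4}$ and the $\alpha_2^{-6}$ terms yields the displayed $L_2$.

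The main obstacle is bookkeeping: one must be honest about the signs and magnitudes of the $\bar\alpha_k$ terms (they alternate with $(-1)^a$, $(-1)^b$, $(-1)^c$, so one cannot always drop them in the favorable direction for free) and one must verify that the worst case of $c-a$, $c-b$ — namely $a = 1$, or $a = b$ — is the one producing the stated exponents $\alpha_k^{-2}, \alpha_k^{-4}, \alpha_k^{-6}$, so that all intermediate cases give something larger. I would organize the proof as three short cases ($k\ge 4$, $k=3$, $k=2$), in each case: (i) lower-bound $D_k^2(F_k(a)^2+F_k(b)^2+F_k(c)^2)$ by $\alpha_k^{2c}(1 + c_1\alpha_k^{-2} + c_2\alpha_k^{-4})$ minus an absorbed constant, (ii) upper-bound $3D_k^2 F_k(a)F_k(b)F_k(c)$ by $\tfrac{3}{D_k}\alpha_k^{2c}(1 + \text{corrections})$, (iii) subtract, divide by $\alpha_k^{2c}$, and read off $L_k$; and finally note $L_k>0$ so the bound is non-vacuous, though strictly the lemma only asserts the inequality, not positivity of $L_k$.
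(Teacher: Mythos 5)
Your overall strategy coincides with the paper's: expand everything with Binet's formula, factor out $\alpha_k^{2c}$, lower-bound the three squares by $\frac{1}{D_k^2}(\alpha_k^{2n}-2)$, upper-bound the triple product, and then locate the extremal position of $a$ (which the paper does via Karamata's inequality for $k=2$, where the relevant coefficient is negative and the worst case is $a=1$, and via Jensen for $k\ge 3$, where it is positive and the worst case is $a=b$). So the architecture is right. However, your designated ``cleanest route'' rests on a false inequality: $D_kF_k(x)\le \alpha_k^x$ fails whenever $x$ is odd, since then $\bar\alpha_k^x<0$ and $D_kF_k(x)=\alpha_k^x+|\bar\alpha_k|^x>\alpha_k^x$ (for instance $D_kF_k(1)=\sqrt{k^2+4}>\alpha_k$); your parenthetical even states the parity condition backwards, claiming the bound holds for odd $x$ and needs care for even $x$. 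The usable bound is the other one you mention, $D_kF_k(x)\le \alpha_k^x+\alpha_k^{-x}$, which is exactly what the paper uses.

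This is not a cosmetic issue, because once you use the correct product bound the term $3D_k^3F_k(a)F_k(b)F_k(c)$ is no longer $\le 3\alpha_k^{2c}$ but $\le 3\bigl(\alpha_k^{2c}+\alpha_k^{2c-2a}+\alpha_k^{2a}+4\bigr)$ (for $k\ge 3$, $c=a+b$), and the extra positive terms destroy the claim that for $k\ge 4$ ``one gets exactly $L_k=1-3/D_k$ with nothing more.'' To recover $L_k=1-3/D_k$ for $k\ge4$ you must retain the contributions $\alpha_k^{2a}+\alpha_k^{2b}$ from $F_k(a)^2+F_k(b)^2$, lower-bound them by $2\alpha_k^{c}$ via convexity, and verify numerically (at $k=4$, where the relevant quantity is smallest) that $2\alpha_k^{2}\bigl(1-\tfrac{3}{D_k}\bigr)\ge 6+\tfrac{12}{D_k}$; this is precisely the paper's analysis of the parabola $p_k(x)=2(1-\tfrac{3}{D_k})x-(6+\tfrac{12}{D_k})x^2$ on $[0,\alpha_k^{-2}]$, whose sign at $x=\alpha_k^{-2}$ separates $k=3$ (negative, producing the extra term in $L_3$) from $k\ge4$ (positive, so the correction can be dropped). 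Your sketch already carries out this finer bookkeeping for $k=2$ and $k=3$, so the fix is within reach, but as written the $k\ge4$ case does not close.
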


\begin{proof}
Using Binet's formula (\ref{eq:Binet}) and taking into account that $\alpha_k\bar{\alpha}_k=-1$, it follows that for any $k\ge 1$
$$F_k(n)^2=\frac{1}{D_k^2}\left(\alpha_k^{2n}+\alpha_k^{-2n}-2\cdot(-1)^n\right)>\frac{1}{D_k^2}\left(\alpha_k^{2n}-2\right).$$
If $k=2$ and $b=c-1-a$, we have
\begin{multline*}
m_2(a,b,c)=F_2(c)^2+F_2(c-1-a)^2+F_2(a)^2-3F_2(c)F_2(c-1-a)F_2(a)\\
>\frac{1}{D_2^2}\left(\alpha_2^{2c}+\alpha_2^{2c-2-2a}+\alpha_2^{2a}-6\right)-\frac{3}{D_2^3} (\alpha_2^{c}-\bar{\alpha}_2^{c})(\alpha_2^{c-1-a}-\bar{\alpha}_2^{c-1-a})(\alpha_2^{a}-\bar{\alpha}_2^{a})\, .
\end{multline*}
As $c=a+b+1>1$ and $\alpha_2\bar{\alpha}_2=-1$, we conclude that
\begin{multline*}
    (\alpha_2^{c}-\bar{\alpha}_2^{c})(\alpha_2^{c-1-a}-\bar{\alpha}_2^{c-1-a})(\alpha_2^{a}-\bar{\alpha}_2^{a})\le
    (\alpha_2^{c}+\alpha_2^{-c})(\alpha_2^{c-1-a}+\alpha_2^{a-c+1})(\alpha_2^{a}+\alpha_2^{-a})=\\
    \alpha_2^{2c-1}+\alpha_2^{2c-1-2a}+\alpha_2^{2a+1}+\alpha_2+\alpha_2^{-1}+\alpha_2^{-2a-1}+\alpha_2^{2a-2c+1}+\alpha_2^{-2c+1}
    <\alpha_2^{2c-1}+\alpha_2^{2c-1-2a}+\alpha_2^{2a+1} +\alpha_2+3.
\end{multline*}
Hence
\begin{multline*}
m_2(a,b,c)> \frac{1}{D_2^2}\left(\alpha_2^{2c}+\alpha_2^{2c-2-2a}+\alpha_2^{2a}-6\right)-\frac{3}{D_2^3} (\alpha_2^{2c-1}+\alpha_2^{2c-1-2a}+\alpha_2^{2a+1}+\alpha_2+3)
\\=\frac{1}{D_2^2}\alpha_2^{2c}\left[\left(1-\frac{3}{D_2}\alpha_2^{-1}\right)+\left(1-\frac{3}{D_2}\alpha_2\right)\left(\alpha_2^{-2-2a}+\alpha_2^{2a-2c}\right)-\left(6+\frac{3}{D_2}\alpha_2+\frac{9}{D_2}\right)\alpha_2^{-2c}\right]\, .
\end{multline*}
As $f(x)=\alpha_2^x$ is a convex function, $c>1$ and $a\ge 1$, by applying Karamata's inequality \cite{K}, we obtain
\begin{equation}
\label{eq:karamata}
\alpha_2^{-2-2a}+\alpha_2^{2a-2c}\le \alpha_2^{-2-2}+\alpha_2^{2-2c}=\alpha_2^{-4}+\alpha_2^{2-2c}\, .
\end{equation}
Since 
$$1-\frac{3}{D_2}\alpha_2=1-\frac{6+3\sqrt{8}}{2\sqrt{8}}<1-\frac{3}{2}<0$$
 and $c\ge a+b+1\ge 3$, we have
\begin{multline*}
    m_2(a,b,c)>\frac{1}{D_2^2}\alpha_2^{2c}\left[\left(1-\frac{3}{D_2}\alpha_2^{-1}\right)+\left(1-\frac{3}{D_2}\alpha_2\right)\left(\alpha_2^{-2-2a}+\alpha_2^{2a-2c}\right)-\left(6+\frac{3}{D_2}\alpha_2+\frac{9}{D_2}\right)\alpha_2^{-2c}\right]\\
    \ge \frac{1}{D_2^2}\alpha_2^{2c}\left[\left(1-\frac{3}{D_2}\alpha_2^{-1}\right) + \left(1-\frac{3}{D_2}\alpha_2\right)(\alpha_2^{-4}+\alpha_2^{2-2c})-\left(6+\frac{3}{D_2}\alpha_2+\frac{9}{D_2}\right)\alpha_2^{-2c}\right] \ge L_2\frac{1}{D_2^2}\alpha_2^{2c},
\end{multline*}
as the coefficient of $\alpha_2^{-2c}$ is clearly negative in the previous expression, and therefore its minimum for $c\ge 3$ is attained at $c=3$.

Analogously, if we assume that $k\ge 3$ and  $c=a+b$, we have
\begin{multline*}
    (\alpha_k^{c}-\bar{\alpha}_k^{c})(\alpha_k^{c-a}-\bar{\alpha}_k^{c-a})(\alpha_k^{a}-\bar{\alpha}_k^{a})\le
    (\alpha_k^{c}+\alpha_k^{-c})(\alpha_k^{c-a}+\alpha_k^{a-c})(\alpha_k^{a}+\alpha_k^{-a})=\\
    \alpha_k^{2c}+\alpha_k^{2c-2a}+\alpha_k^{2a}+2+\alpha_k^{-2a}+\alpha_k^{2a-2c}+\alpha_k^{-2c}
    <\alpha_k^{2c}+\alpha_k^{2c-2a}+\alpha_k^{2a} +4.
\end{multline*}
Hence
\begin{multline*}
m_k(a,b,c)> \frac{1}{D_k^2}\left(\alpha_k^{2c}+\alpha_k^{2c-2a}+\alpha_k^{2a}-6\right)-\frac{3}{D_k^3} (\alpha_k^{2c}+\alpha_k^{2c-2a}+\alpha_k^{2a}+4)
\\=\frac{1}{D_k^2}\alpha_k^{2c}\left[\left(1-\frac{3}{D_k}\right)\left(1+\alpha_k^{-2a}+\alpha_k^{2a-2c}\right)-\left(6+\frac{12}{D_k}\right)\alpha_k^{-2c}\right]\, .
\end{multline*}
Now, the factor $1-\frac{3}{D_k}=1-\frac{3}{\sqrt{k^2+4}}$ becomes positive for $k\ge 3$, so this time we need to apply the opposite Karamata bound \cite{K} (which becomes simply Jensen's inequality in this case)
$$\alpha_k^{-2a}+\alpha_k^{2a-2c}\ge 2\alpha_k^{\frac{-2a+2a-2c}{2}}=2\alpha_k^{-c},$$
yielding
$$m_k(a,b,c)>\frac{1}{D_k^2}\alpha_k^{2c}\left[\left(1-\frac{3}{D_k}\right)\left(1+2\alpha_k^{-c}\right)-\left(6+\frac{12}{D_k}\right)\alpha_k^{-2c}\right].$$
Let us consider the polynomial
$$p_k(x)=2\left( 1-\frac{3}{D_k}\right) x - \left(6+\frac{12}{D_k}\right)x^2.$$
Then, our bound can be written as
$$m_k(a,b,c)> \frac{1}{D_k^2}\alpha_k^{2c} \left[ 1-\frac{3}{D_k} + p_k(\alpha_k^{-c}) \right].$$
We know that $c=a+b\ge 2$, so $\alpha_k^{-c}\in (0,\alpha_k^{-2}]$, as $\alpha_k>1$, and therefore, $\lim_{c\to\infty} \alpha_k^{-c} = 0$. The polynomial $p_k(x)$ is a parabola with a negative leading coefficient, so its minimum in the interval $[0,\alpha_k^{-2}]$ is attained at one of the ends of the interval. A direct computation shows that $p_3(\alpha_3^{-2})<0=p_3(0)$, and hence
$$m_3(a,b,c)>\frac{1}{D_3^2}\alpha_3^{2c} \left[ 1-\frac{3}{D_3} + p_3(\alpha_3^{-2}) \right]= L_3\frac{1}{D_3^2}\alpha_3^{2c}.$$
On the other hand, for $k\ge 4$, we can prove that $p_k(\alpha_k^{-2})>0=p_k(0)$ as follows. The expression
$$\alpha_k^4p_k(\alpha_k^{-2})=2\alpha_k^2\left(1-\frac{3}{D_k}\right)-\left(6+\frac{12}{D_k}\right)$$
is clearly increasing in $k$, because $\alpha_k$ and $D_k$ are both increasing functions of $k$. A direct computation shows that for $k=4$ we have $\alpha_4^4p_4(\alpha_4^{-2})>0$, so $p_k(\alpha_k^{-2})$ must be positive for all $k\ge 4$. As a consequence,
$$m_k(a,b,c)>\frac{1}{D_k^2}\alpha_k^{2c} \left[ 1-\frac{3}{D_k} + p_k(\alpha_k^{-c}) \right] > \frac{1}{D_k^2}\alpha_k^{2c} \left[ 1-\frac{3}{D_k} + p_k(0) \right] = \frac{1}{D_k^2}\alpha_k^{2c} \left( 1-\frac{3}{D_k} \right)=L_k\frac{1}{D_k^2}\alpha_k^{2c}.$$
\end{proof}
We have the following lower bound for the constant $L_k$ in the lemma above.
\begin{lemma}
\label{lemma:Lkinequality}
For each $k\ge 2$, the constant $L_k$  satisfies
$$L_k>\alpha_k^{-2}.$$
\end{lemma}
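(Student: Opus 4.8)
The plan is to split the argument into the three regimes $k\ge 4$, $k=3$, $k=2$, in which $L_k$ is given by a different formula, and to dispose of each case essentially by hand.

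For $k\ge 4$ the inequality is very far from tight. Here $L_k=1-\frac{3}{D_k}=1-\frac{3}{\sqrt{k^2+4}}$ is an increasing function of $k$, so $L_k\ge 1-\frac{3}{\sqrt{20}}$, and a one-line check (the inequality $1-\frac{3}{\sqrt{20}}>\frac{1}{16}$ is equivalent to $\sqrt{20}>\tfrac{16}{5}$, i.e. to $20>10.24$) shows $1-\frac{3}{\sqrt{20}}>\frac{1}{16}$. On the other hand $\alpha_k=\frac{k+\sqrt{k^2+4}}{2}>k$, whence $\alpha_k^{-2}<k^{-2}\le\frac{1}{16}$ for every $k\ge 4$. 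Chaining the two bounds gives $L_k>\frac{1}{16}>\alpha_k^{-2}$.

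For $k=2$ and $k=3$ the statement is an explicit numerical inequality, which I would reduce, after elementary surd arithmetic, to the comparison of a rational number with a single square root. The key point is that, since $\alpha_k\bar{\alpha}_k=-1$, one has the closed forms $\alpha_2^{-1}=\sqrt2-1$ and $\alpha_3^{-1}=\tfrac{1}{2}(\sqrt{13}-3)$, so every power of $\alpha_k^{-1}$ appearing in the displayed formulas for $L_2$ and $L_3$ (as well as $\alpha_k^{-2}$ itself) can be written as $p+q\sqrt d$ with $p,q\in\QQ$ and $d\in\{2,13\}$. Substituting these, together with $D_2=2\sqrt2$ and $D_3=\sqrt{13}$, into the expressions for $L_2$ and $L_3$ and collecting terms, the required inequality $L_k>\alpha_k^{-2}$ collapses, after clearing denominators, to an inequality of the shape $N\sqrt d>M$ with $M,N$ explicit positive integers; concretely one obtains $869\sqrt2>1228$ for $k=2$ and $2070\sqrt{13}>7462$ for $k=3$. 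Each is then settled by squaring, i.e. by verifying $2\cdot 869^2>1228^2$ and $13\cdot 2070^2>7462^2$.

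I do not anticipate any real obstacle here: the case $k\ge 4$ is a monotonicity remark and the cases $k=2,3$ are finite computations. The only point requiring care is that for $k=3$ the gap $L_3-\alpha_3^{-2}$ is numerically small (of order $10^{-2}$), so the surd arithmetic must be carried out exactly rather than with truncated decimals; once that is done, the rational approximation to $\sqrt{13}$ implicit in $13\cdot 2070^2>7462^2$ is comfortably satisfied.
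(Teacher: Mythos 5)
Your proof is correct and follows essentially the same strategy as the paper: a monotonicity argument for $k\ge 4$ (the paper compares the two sides directly after checking $k=4$, while you chain both through the constant $\tfrac{1}{16}$ — a cosmetic difference) and an explicit verification for $k=2,3$, which the paper simply labels ``a direct computation.'' Your exact surd reductions do check out ($L_2=\tfrac{861\sqrt2-1216}{4}$ and $L_3=\tfrac{498\sqrt{13}-1794}{13}$, giving precisely $869\sqrt2>1228$ and $1035\sqrt{13}>3731$, the latter being your inequality divided by $2$), so you have in fact supplied the detail the paper omits.
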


\begin{proof}
For $k=2,3$, a direct computation shows that $\alpha_2^2L_2>1$ and $\alpha_3^2L_3>1$, so $L_k>\alpha_k^{-2}$ for $k=2,3$. For $k\ge 4$ we wish to prove that
$$L_k=1-\frac{3}{D_k}>\alpha_k^{-2}.$$
Rearranging the equation, this is equivalent to proving that for all $k\ge 4$
$$1>\frac{3}{D_k}+\alpha_k^{-2}=\frac{3}{\sqrt{k^2+4}}+\frac{4}{(k+\sqrt{k^2+4})^2}.$$
The right-hand side of this expression is decreasing in $k$ and for $k=4$ a direct computation shows that
$$\frac{3}{D_4}+\alpha_4^{-2} <1,$$
and hence the inequality holds for all $k\ge 4$.
\end{proof}

\begin{lemma}
\label{lemma:increasing}
Let $1\le a\le b\le c$ and $c\ge 3$. Suppose that $a\le a'\le c$ and $b\le b'\le c$. Then
$$m_k(a,b,c)\ge m_k(a',b',c)$$
and equality holds if and only if $a=a'$ and $b=b'$. In particular, if $(F_k(a),F_k(b),F_k(c))$ is an ordered minimal Markoff-Fibonacci $m$-triple, then
$$m_k(1,1,c)\ge m_k(a,b,c) \ge m_k(a,c-a-s,c),$$
where $s=1,$ for $k=2$ and $s=0,$ for $k\geq 3.$ 

\end{lemma}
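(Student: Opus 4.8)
The plan is to prove the monotonicity of $m_k(\cdot,\cdot,c)$ in its first two arguments by an elementary factorisation that reduces each coordinate to a one-variable sign computation, and then to obtain the displayed two-sided bound by specialising the index triples. First I would record the factorisation. Fixing $b$ and $c$ and writing $m_k(a,b,c)=F_k(a)^2+\bigl(F_k(b)^2+F_k(c)^2\bigr)-3F_k(b)F_k(c)\,F_k(a)$, the identity $u^2-v^2=(u-v)(u+v)$ gives, for any indices $a,a'$,
$$m_k(a,b,c)-m_k(a',b,c)=\bigl(F_k(a)-F_k(a')\bigr)\bigl(F_k(a)+F_k(a')-3F_k(b)F_k(c)\bigr),$$
and, instead freezing $a'$ and $c$,
$$m_k(a',b,c)-m_k(a',b',c)=\bigl(F_k(b)-F_k(b')\bigr)\bigl(F_k(b)+F_k(b')-3F_k(a')F_k(c)\bigr).$$

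Next I would run the sign analysis. For $k\ge 2$ the sequence $F_k$ is strictly increasing on $\{1,2,3,\dots\}$ (since $F_k(n)=kF_k(n-1)+F_k(n-2)>F_k(n-1)$ for $n\ge 2$) and $F_k(n)\ge 1$ for $n\ge 1$. Assuming $1\le a\le a'\le c$ and $1\le b\le c$, the first factor $F_k(a)-F_k(a')$ is $\le 0$ and vanishes exactly when $a=a'$, while $F_k(a)+F_k(a')\le 2F_k(c)$ and $3F_k(b)F_k(c)\ge 3F_k(c)$ force the second factor to be $\le -F_k(c)<0$; hence the product is nonnegative and vanishes iff $a=a'$, giving $m_k(a,b,c)\ge m_k(a',b,c)$ with equality iff $a=a'$. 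Applying the identical argument to the second identity (using $F_k(a')\ge 1$ and $b\le b'\le c$) yields $m_k(a',b,c)\ge m_k(a',b',c)$ with equality iff $b=b'$, and composing proves $m_k(a,b,c)\ge m_k(a',b',c)$ with equality iff $a=a'$ and $b=b'$. (The hypothesis $c\ge 3$ is not used here; it is retained only because that is the range in which the statement is later applied.)

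Finally I would deduce the ``in particular'' clause. For an ordered minimal Markoff $m$-triple $(F_k(a),F_k(b),F_k(c))$ we have $1\le a\le b\le c$. The left inequality is the monotonicity statement applied to the index triples $(1,1,c)$ and $(a,b,c)$, whose hypotheses $1\le 1\le c$, $1\le a\le c$, $1\le b\le c$ all hold. The right inequality is the same statement applied to $(a,b,c)$ and $(a,c-a-s,c)$; this requires $b\le c-a-s\le c$, where the upper inequality is immediate and the lower one, equivalent to $c\ge a+b+s$, is exactly what Lemma~\ref{lemma:F(n+m)<=3F(n)F(m)} extracts from minimality: $F_k(c)\ge 3F_k(a)F_k(b)$ forces $c\ge a+b$ when $k\ge 3$ (so $s=0$) and $c\ge a+b+1$ when $k=2$ (so $s=1$), the latter with the single exceptional triple $(a,b,c)=(2,2,4)$.

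I expect no serious obstacle: the whole argument is routine once the factorisation is written down. The two points needing care are keeping track of strictness, which follows from the strict monotonicity of $F_k$ together with the strict negativity of the second factor, and, in the ``in particular'' part, matching the hypothesis $b\le c-a-s$ of the monotonicity statement with the minimality condition via Lemma~\ref{lemma:F(n+m)<=3F(n)F(m)} --- here the exceptional Pell triple $(F_2(2),F_2(2),F_2(4))$, which falls outside the range $c\ge a+b+1$ and is in any case governed by Theorem~\ref{th1}, has to be acknowledged separately.
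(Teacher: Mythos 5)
Your proof is correct. The paper itself gives no self-contained argument for this lemma --- it simply defers to Lemma 4.1 of \cite{ACMRS} (the case $k=1$) and remarks that the same proof works for $k\ge 2$ since $F_k(2)>F_k(1)=1$; your factorisation
$m_k(a,b,c)-m_k(a',b,c)=\bigl(F_k(a)-F_k(a')\bigr)\bigl(F_k(a)+F_k(a')-3F_k(b)F_k(c)\bigr)$
together with the observation that the second factor is at most $-F_k(c)<0$ under the stated index constraints is exactly the kind of one-variable monotonicity argument that reference uses, so in substance you are on the paper's route, just writing it out in full. Your handling of the equality case (sum of two nonnegative terms vanishes iff both do) and of the ``in particular'' clause via Lemma \ref{lemma:F(n+m)<=3F(n)F(m)} is also correct. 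Your closing caveat about the Pell triple $(F_2(2),F_2(2),F_2(4))=(2,2,12)$ is well taken and in fact identifies a genuine edge case in the lemma as stated: that triple is minimal ($12=3\cdot 2\cdot 2$) but has $c=a+b$, so $b\not\le c-a-1$ and the right-hand inequality $m_2(a,b,c)\ge m_2(a,c-a-1,c)$ fails for it ($m_2(2,2,4)=8$ while $m_2(2,1,4)=77$); the paper's statement implicitly excludes this single triple, and your proof correctly isolates it rather than papering over it.
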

\begin{proof}

The lemma and its proof are entirely analogous to Lemma 4.1 in \cite{ACMRS}, which addresses the case  \( k = 1 \). In this lemma, the starting point is \( a = 2 \) because \( F_1(2) = F_1(1) = 1 \). In our situation, with \( k \geq 2 \), the case \( a = 1 \) is also valid since \( F_k(2) > F_k(1) = 1 \).

\end{proof}

\begin{lemma}
\label{lemma:c=c'}
If 
 $(F_k(a),F_k(b),F_k(c))$ and $(F_k(a'),F_k(b'), F_k(c'))$ are two ordered minimal Markoff-Fibonacci $m$-triples with $c\ge c'$, then $c=c'.$
\end{lemma}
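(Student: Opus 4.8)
The plan is to compare the two $m$-triples using the lower bound of Lemma \ref{lemma:KaramataBound} together with a matching \emph{upper} bound for $m_k(a,b,c)$ in terms of $\alpha_k$ and $c$, and then argue that if $c > c'$ the lower bound for the $c$-triple already exceeds the upper bound for the $c'$-triple, forcing the two values of $m$ to differ — a contradiction. First I would reduce to the extremal configurations: since both triples are minimal, Lemma \ref{lemma:F(n+m)<=3F(n)F(m)} gives $c \ge a+b+s$ with $s=1$ for $k=2$ and $s=0$ for $k\ge 3$, and by Lemma \ref{lemma:increasing} the value $m_k(a,b,c)$ is maximised (over admissible $a,b$ with third component $c$) at $(1,1,c)$ and minimised at $(a, c-a-s, c)$. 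The smallest possible $m$ with third component $c$ is therefore bounded below by $m_k(1,\,c-1-s,\,c)$ — equivalently the case $a=1$ in Lemma \ref{lemma:KaramataBound} — which by that lemma and Lemma \ref{lemma:Lkinequality} satisfies
\begin{equation*}
m_k(a,b,c) > L_k \frac{\alpha_k^{2c}}{D_k^2} > \frac{\alpha_k^{2c-2}}{D_k^2}.
\end{equation*}

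Next I would establish an upper bound for $m_k(a',b',c')$. By Lemma \ref{lemma:increasing} it suffices to bound $m_k(1,1,c')=1+1+F_k(c')^2-3F_k(c')$; using Binet's formula one has $F_k(c')^2 = \frac{1}{D_k^2}\bigl(\alpha_k^{2c'}+\alpha_k^{-2c'}-2(-1)^{c'}\bigr) < \frac{1}{D_k^2}\bigl(\alpha_k^{2c'}+3\bigr)$ and $3F_k(c')>0$, so
\begin{equation*}
m_k(a',b',c') \le m_k(1,1,c') < \frac{\alpha_k^{2c'}}{D_k^2} + C_k
\end{equation*}
for an explicit constant $C_k$ (coming from the $2+\tfrac{3}{D_k^2}$ terms). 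The heart of the argument is then the comparison: if $c \ge c'+1$, then $\alpha_k^{2c-2} \ge \alpha_k^{2c'}$, so
\begin{equation*}
m_k(a,b,c) > \frac{\alpha_k^{2c-2}}{D_k^2} \ge \frac{\alpha_k^{2c'}}{D_k^2} > m_k(a',b',c') - C_k,
\end{equation*}
and one needs the gap $\alpha_k^{2c-2} - \alpha_k^{2c'} \ge \alpha_k^{2c'}(\alpha_k^{2}-1)/\alpha_k^{2}\cdot(\dots)$ — more simply, a gap of at least one full factor of $\alpha_k^2$ when $c \ge c'+2$, or a careful accounting when $c=c'+1$ — to dominate the constant $C_k$. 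Since $C_k$ is a fixed constant while $\alpha_k^{2c'}$ grows, this works as soon as $c'$ is bounded below; the finitely many small values of $c'$ (and $c'$ such that $c\le c'+1$) must be checked by hand or by a slightly sharper version of the inequalities.

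The main obstacle I anticipate is the boundary case $c = c'+1$, where the separation between $\alpha_k^{2c-2}=\alpha_k^{2c'}$ and $\alpha_k^{2c'}$ is zero to leading order, so the crude bounds above do not immediately separate the two $m$-values; here one must use the \emph{sharper} lower bound $L_k\alpha_k^{2c}/D_k^2$ with the genuine value of $L_k$ (not just $L_k > \alpha_k^{-2}$) against the genuine upper bound $m_k(1,1,c')$, and verify $L_k \alpha_k^{2} > 1 + (\text{lower-order terms})/\alpha_k^{2c'}$, which holds once $c'$ exceeds an explicit threshold, the remaining cases being a finite check. A cleaner alternative, which I would pursue if the constants are uncooperative, is to invoke Lemma \ref{lemma:increasing} to get $m_k(a',b',c') \le m_k(1,1,c')$ and then note $m_k(1,1,c') < m_k(1, c-1-s, c)$ whenever $c \ge c'+1$ by a direct monotonicity comparison of the two explicit expressions in $F_k$, bypassing the Binet estimates entirely; this turns the whole lemma into an elementary inequality between $k$-Fibonacci numbers of consecutive-ish indices.
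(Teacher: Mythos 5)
Your overall strategy coincides with the paper's: lower-bound $m_k(a,b,c)$ by $L_k\alpha_k^{2c}/D_k^2>\alpha_k^{2c-2}/D_k^2$ via Lemmas \ref{lemma:KaramataBound} and \ref{lemma:Lkinequality}, upper-bound $m_k(a',b',c')$ by $m_k(1,1,c')$ via Lemma \ref{lemma:increasing}, and compare exponents of $\alpha_k$. The genuine gap is in your upper bound. You estimate $m_k(1,1,c')=F_k(c')^2-3F_k(c')+2$ by simply discarding the negative term $-3F_k(c')$, which leaves $m_k(1,1,c')<\alpha_k^{2c'}/D_k^2+C_k$ with a positive additive constant $C_k$. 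That constant is precisely what manufactures your unresolved boundary case $c=c'+1$: there the leading terms $\alpha_k^{2c-2}$ and $\alpha_k^{2c'}$ coincide, the constant is not absorbed, and you fall back on ``a finite check'' or a sharper use of $L_k$ --- neither of which you actually carry out. Note also that the check is finite only for each fixed $k$, while the lemma must hold for every $k\ge 2$, so as written the argument is incomplete. The fix is to keep the term you threw away: since $F_k(c')\ge 1$, one has $-3F_k(c')+2\le -1$, while Binet gives $F_k(c')^2\le \frac{1}{D_k^2}\alpha_k^{2c'}+\frac{3}{D_k^2}$ and $\frac{3}{D_k^2}<1$ because $D_k^2=k^2+4\ge 8$; hence $m_k(1,1,c')<\frac{1}{D_k^2}\alpha_k^{2c'}$ with no constant at all. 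Then $\alpha_k^{2(c-1)}<D_k^2m<\alpha_k^{2c'}$ forces $c'>c-1$, i.e. $c'=c$, in one stroke, with no case split.

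A secondary imprecision: Lemma \ref{lemma:increasing} does not show that $m_k(1,c-1-s,c)$ minimizes $m$ over all admissible $(a,b)$ with third index $c$, since passing from $(a,c-a-s)$ to $(1,c-1-s)$ decreases the first index while increasing the second, a direction its monotonicity does not cover. This is harmless here --- Lemma \ref{lemma:KaramataBound} applies directly to $m_k(a,c-a-s,c)$ for the given $a$ and yields the same lower bound uniformly in $a$ --- but the reduction ``to the case $a=1$'' should be dropped. The alternative elementary route you sketch at the end is not developed enough to assess.
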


\begin{proof}
Assume that $m_k(a,b,c)=m=m_k(a',b',c')$.  By applying Lemma \ref{lemma:increasing} and Lemma \ref{lemma:KaramataBound}, it follows that
$$m=m_{2}(a,b,c)\ge m_{2}(a,c-a-1,c)> L_2 \frac{1}{D_2^2}\alpha_2^{2c}$$
if $k=2$ and
$$m=m_{k}(a,b,c)\ge m_{k}(a,c-a,c)> L_k \frac{1}{D_k^2}\alpha_k^{2c},$$
for any other $k\ge 3$. From Lemma \ref{lemma:Lkinequality} we know that $L_k>\alpha_k^{-2}$ for all $k\ge 2$, so
\begin{equation}
\label{eq:minimal1}
m_k(a,b,c)> L_{k}\frac{1}{D_k^2}\alpha_k^{2c}>\frac{1}{D_k^2}\alpha_k^{2c-2}\, .
\end{equation}
On the other hand, from Lemma \ref{lemma:increasing} we deduce that
\begin{multline}
\label{eq:minimal2}
m=m_k(a',b',c')\le m_k(1,1,c')=F_k(c')^2-3F_k(c')+2 <\\ \frac{1}{D_k^2}\alpha_k^{2c'}+\frac{1}{D_k^2}\bar{\alpha}_k^{2c'}+\frac{2}{D_k^2}(-1)^{c'}-1< \frac{1}{D_k^2} \alpha_k^{2c'}\, .
\end{multline}
Using equations \eqref{eq:minimal1} and \eqref{eq:minimal2} together, we obtain $\alpha_k^{2(c-1)} < D_k^2 m < \alpha_k^{2c'}$. Thus, $c'>c-1$. As we assumed $c'\le c$, we conclude that $c'=c$.
\end{proof}

\begin{lemma}
\label{lemma:aa'b'b}
Let $(F_k(a),F_k(b),F_k(c))$ and $(F_k(a'),F_k(b'),F_k(c))$ be two distinct ordered minimal Markoff-Fibonacci $m$-triples with the same third element. If $a\le a'$, then $a<a'\le b'<b$.
\end{lemma}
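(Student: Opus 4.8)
The plan is to reduce everything to the monotonicity of $m_k(\,\cdot\,,\,\cdot\,,c)$ established in Lemma~\ref{lemma:increasing}. Set $m=m_k(a,b,c)=m_k(a',b',c)$; since the two triples are distinct and share their third coordinate, $(a,b)\neq(a',b')$. A preliminary observation is that $c\ge 3$: for $c\le 2$ a direct check shows there is at most one ordered minimal Markoff-Fibonacci $m$-triple with third coordinate $F_k(c)$ (namely $(F_k(1),F_k(1),F_k(2))$, and only when $k\ge 3$), so two distinct such triples cannot occur; hence Lemma~\ref{lemma:increasing} is applicable with third coordinate $c$.

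First I would show $b'<b$. Suppose instead $b\le b'$. Since $(F_k(a'),F_k(b'),F_k(c))$ is ordered, $a'\le c$ and $b'\le c$, so the hypotheses $a\le a'\le c$ and $b\le b'\le c$ of Lemma~\ref{lemma:increasing} hold and yield $m_k(a,b,c)\ge m_k(a',b',c)$, with equality if and only if $a=a'$ and $b=b'$. As $(a,b)\neq(a',b')$, the inequality is strict, contradicting $m_k(a,b,c)=m_k(a',b',c)$. Hence $b'<b$.

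Next I would show $a<a'$. Suppose $a=a'$; then $b\neq b'$, and by the previous step $b'<b$. The triple $(F_k(a),F_k(b'),F_k(c))$ is ordered, since $a=a'\le b'\le c$, so Lemma~\ref{lemma:increasing} applied with $a\le a\le c$ and $b'\le b\le c$ gives $m_k(a,b',c)\ge m_k(a,b,c)$, with equality if and only if $b'=b$, hence strict --- contradicting $m_k(a,b',c)=m=m_k(a,b,c)$. Therefore $a<a'$.

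Combining $a<a'$ with the ordering $a'\le b'$ of the second triple and with $b'<b$ gives $a<a'\le b'<b$, as claimed. I do not anticipate a genuine obstacle here: the argument consists of two short invocations of Lemma~\ref{lemma:increasing}, and the only points demanding care are checking $c\ge 3$ so that lemma is available, and keeping track of which of the two triples plays the ``smaller'' role in each invocation.
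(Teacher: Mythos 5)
Your proof is correct and follows essentially the same route as the paper's: both arguments consist of two applications of Lemma~\ref{lemma:increasing}, one ruling out $a=a'$ and one ruling out $b\le b'$ (you merely prove the two claims in the opposite order, and you additionally verify the hypothesis $c\ge 3$, which the paper leaves implicit).
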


\begin{proof}
Suppose first that $a=a'$. Then, by  Lemma \ref{lemma:increasing}, the equality $m_k(a,b,c)=m_k(a',b',c')=m_k(a,b',c)$ is only possible if $b=b'$, in which case $(a,b,c)=(a',b',c')$, contradicting the assumption that the two $m$-triples are distinct. Thus $a<a'$. If $b\le b'$, then Lemma \ref{lemma:increasing} implies $m(a,b,c)<m(a',b',c)$, which is not possible as both are $m$-triples for the same $m$. Therefore, it follows that  $a<a'\le b'< b$.
\end{proof}

\begin{lemma}
\label{lemma:a+b=a'+b'} Let $(F_k(a),F_k(b),F_k(c))$ and $(F_k(a'),F_k(b'),F_k(c))$ be two ordered minimal Markoff-Fibonacci $m$-triples. Then $a+b=a'+b'$.
\end{lemma}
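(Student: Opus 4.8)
The plan is to reduce the statement to one algebraic identity together with a single numerical inequality about $k$-Fibonacci numbers. Using $F_k(a)^2+F_k(b)^2=(F_k(a)+F_k(b))^2-2F_k(a)F_k(b)$ one rewrites
\[ m_k(a,b,c)=F_k(c)^2-G(a,b),\qquad G(u,v):=(3F_k(c)+2)F_k(u)F_k(v)-\bigl(F_k(u)+F_k(v)\bigr)^2 . \]
By Lemma \ref{lemma:c=c'} the two triples have the same third element $F_k(c)$, so $m_k(a,b,c)=m=m_k(a',b',c)$ forces $G(a,b)=G(a',b')$; it therefore suffices to show that $G(u,v)=G(u',v')$ for two minimal Markoff-Fibonacci $m$-triples sharing $F_k(c)$ implies $u+v=u'+v'$. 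Set $\epsilon:=1$ if $k=2$ and $\epsilon:=0$ if $k\ge 3$. If $k=2$ and one of the triples equals $(2,2,12)=(F_2(2),F_2(2),F_2(4))$, then $m=8$ and, since $(2,2,12)$ is the only minimal Markoff $8$-triple (as used in the proof of Theorem \ref{th1}), the other triple coincides with it and there is nothing to prove. If $c-\epsilon\le 2$, every minimal triple with third element $F_k(c)$ has $u+v\le c-\epsilon\le 2$, hence $u=v=1$, and again the claim is trivial. We may thus assume $c-\epsilon\ge 3$ and, by Lemma \ref{lemma:F(n+m)<=3F(n)F(m)}, that each of our triples satisfies $c\ge u+v+\epsilon$; in particular $F_k(c)\ge F_k(3+\epsilon)\ge 10$.

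Next I would bound $G$ from both sides in terms of $c$ and the index sum only. If $u+v=\sigma$, then \eqref{eq:bound_product} gives $F_k(u)F_k(v)\le F_k(\sigma-1)$, \eqref{eq:bound_product2} gives $F_k(u)F_k(v)\ge\tfrac{k^2}{k^2+1}F_k(\sigma-1)$, the elementary bound $(F_k(u)+F_k(v))^2\ge 4F_k(u)F_k(v)$ holds, and $F_k(u)+F_k(v)\le F_k(u)F_k(v)+1\le F_k(\sigma-1)+1$ because $(F_k(u)-1)(F_k(v)-1)\ge 0$. Combining these,
\[ \underline G(c,\sigma):=\tfrac{k^2}{k^2+1}(3F_k(c)+2)F_k(\sigma-1)-\bigl(1+F_k(\sigma-1)\bigr)^2 \;\le\; G(u,v) \;\le\; \Bigl(3F_k(c)-\tfrac{2(k^2-1)}{k^2+1}\Bigr)F_k(\sigma-1)=:\overline G(c,\sigma). \]
I would then establish two facts for $2\le\sigma\le c-\epsilon$. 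First, $\sigma\mapsto\underline G(c,\sigma)$ is strictly increasing: the concave quadratic $x\mapsto\tfrac{k^2}{k^2+1}(3F_k(c)+2)x-(1+x)^2$ has its vertex at $x=\tfrac{k^2(3F_k(c)+2)}{2(k^2+1)}-1$, which is larger than $\tfrac65F_k(c)-1>F_k(c)>F_k(c-1)\ge F_k(\sigma-1)$. Second, the separation estimate
\[ \overline G(c,\sigma)<\underline G(c,\sigma+1). \]
Both reduce, after substituting $F_k(\sigma)=kF_k(\sigma-1)+F_k(\sigma-2)$ and using $F_k(c)\ge F_k(\sigma+\epsilon)$ (so $F_2(c)\ge 2F_2(\sigma)+F_2(\sigma-1)$ when $k=2$), to polynomial inequalities in $k$ and in the three numbers $F_k(\sigma-2),F_k(\sigma-1),F_k(\sigma)$.

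With these in hand the conclusion is immediate. Suppose $s:=a+b\ne s':=a'+b'$; by interchanging the two triples we may assume $s<s'$. Then $2\le s<s'\le c-\epsilon$, so applying the separation estimate at $\sigma=s$ and the monotonicity on $[s+1,s']$ yields
\[ G(a',b')\ge\underline G(c,s')\ge\underline G(c,s+1)>\overline G(c,s)\ge G(a,b), \]
contradicting $G(a,b)=G(a',b')$. Hence $a+b=a'+b'$.

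I expect the separation estimate $\overline G(c,\sigma)<\underline G(c,\sigma+1)$ to be the main obstacle: the winning margin is genuine but small (it is already rather tight for $k=2$ at $\sigma=3$ and for $k=3$ at the smallest admissible $\sigma$), so the factor $1+\tfrac1{k^2}$ coming from \eqref{eq:bound_product2} and the gain $F_k(c)\ge F_k(\sigma+\epsilon)$ coming from minimality have to be propagated through the computation without further loss, and the boundary value $\sigma=2$ (that is, $u=v=1$) must be checked separately by hand.
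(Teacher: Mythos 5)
Your proposal is correct but follows a genuinely different route from the paper. The paper first invokes Lemma \ref{lemma:aa'b'b} to order the indices as $a<a'\le b'<b$, rearranges $m_k(a,b,c)=m_k(a',b',c)$ into equation \eqref{eq:a+b=a'+b'eq1}, proves its left-hand side is positive, and then kills the two cases $a'+b'>a+b$ and $a'+b'<a+b$ separately by estimating the ratio $F_k(a')F_k(b')/(F_k(a)F_k(b))$ with Binet's formula (the second case also leaning on minimality $F_k(c)\ge 3F_k(a)F_k(b)$). You instead absorb the whole equation into the single symmetric quantity $G$, which depends on the pair only through $F_k(u)F_k(v)$ and $F_k(u)+F_k(v)$, and show that the possible values of $G$ for index sum $\sigma$ and for index sum $\sigma+1$ lie in disjoint, monotonically ordered intervals; this bypasses both Lemma \ref{lemma:aa'b'b} and Binet's formula, using only \eqref{eq:bound_product} and \eqref{eq:bound_product2}. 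The one step you leave unverified, the separation $\overline G(c,\sigma)<\underline G(c,\sigma+1)$, does hold and with more room than you fear: writing $F=F_k(c)$, $P=F_k(\sigma-1)$, $Q=F_k(\sigma)$, the difference $\underline G(c,\sigma+1)-\overline G(c,\sigma)$ equals
\[
F\Bigl(\tfrac{3k^2}{k^2+1}Q-3P\Bigr)-Q^2-\tfrac{2}{k^2+1}Q+\tfrac{2(k^2-1)}{k^2+1}P-1,
\]
and since $P\le Q/k$ the coefficient of $F$ is at least $3Q\,\tfrac{k^3-k^2-1}{k(k^2+1)}$ (that is, $\tfrac{9}{10}Q$ for $k=2$ and $1.7Q$ for $k=3$), while $F\ge F_k(\sigma+\epsilon)$ gives $F\ge 2Q$ for $k=2$ and $F\ge Q$ for $k\ge 3$; in every case the resulting lower bound $0.7Q^2-O(Q)$ or better is positive for all admissible $Q\ge k$. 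So your plan closes; the price of this uniformity is the separate dispatch of $(2,2,12)$ (the equality case of Lemma \ref{lemma:F(n+m)<=3F(n)F(m)}) and of $c-\epsilon\le 2$, both of which you correctly anticipated, and what it buys is a proof that needs neither the interlacing of indices nor the explicit roots $\alpha_k,\bar\alpha_k$.
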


\begin{proof}
By Lemma \ref{lemma:aa'b'b} we can assume without loss of generality that $1\le a<a'\le b'<b\le c$. In particular, $b\ge 3$. Rearranging the equation $m_k(a,b,c)=m_k(a',b',c)$, yields
\begin{equation}
\label{eq:a+b=a'+b'eq1}
    F_k(a)^2+F_k(b)^2-F_k(a')^2-F_k(b')^2=3F_k(c)\left(F_k(a)F_k(b)-F_k(a')F_k(b')\right)\, .
\end{equation}
Since $b\ge 3$ and $a'\le b'<b$ we have
$$F_k(b)^2\geq k^2 F_k(b-1)^2> 2 F_k(b-1)^2\ge F_k(b')^2+F_k(a')^2,$$
so the left-hand side of equation \eqref{eq:a+b=a'+b'eq1} is always positive and, thus, so is the right-hand side. Let us see that this is impossible if $a'+b'>a+b$. Indeed,
\begin{equation*}
    \frac{F_k(a')F_k(b')}{F_k(a)F_k(b)} = \frac{(\alpha_k^{a'}-\bar{\alpha}_k^{a'})(\alpha_k^{b'}-\bar{\alpha}_k^{b'})}{(\alpha_k^a-\bar{\alpha}_k^{a})(\alpha_k^b-\bar{\alpha}_k^b)}\ge
    \frac{(\alpha_k^{a'}-\alpha_k^{-a'})(\alpha_k^{b'}-\alpha_k^{-b'})}{(\alpha_k^a+\alpha_k^{-a})(\alpha_k^b+\alpha_k^{-b})} = \frac{\alpha_k^{a'+b'}-\alpha_k^{b'-a'}-\alpha_k^{a'-b'}+\alpha_k^{-a'-b'}}{\alpha_k^{a+b}+\alpha_k^{b-a}+\alpha_k^{a-b}+\alpha_k^{-a-b}}\,.
\end{equation*}
 Assume that $a'+b'=a+b+r$ with $r>0$ and let $s=a+b$. Then $a'+b'=s+r$. Dividing the numerator and denominator by $\alpha_k^s$ yields
\begin{equation*}
\frac{\alpha_k^{a'+b'}-\alpha_k^{b'-a'}-\alpha_k^{a'-b'}+\alpha_k^{-a'-b'}}{\alpha_k^{a+b}+\alpha_k^{b-a}+\alpha_k^{a-b}+\alpha_k^{-a-b}} = \frac{\alpha_k^r-\alpha_k^{r-2a'}-\alpha_k^{r-2b'}+\alpha_k^{-2s-r}}{1+\alpha_k^{-2a}+\alpha_k^{-2b}+\alpha_k^{-2s}}=\alpha_k^r\frac{1-\alpha_k^{-2a'}-\alpha_k^{-2b'}+\alpha_k^{-2s-2r}}{1+\alpha_k^{-2a}+\alpha_k^{-2b}+\alpha_k^{-2s}}\, .
\end{equation*}
As $1\le a<a'\le b'<b$, we have $a\ge 1$, $a'\ge 2$, $b'\ge 2$, $b\ge 3$ and $s=a+b\ge 4$. Thus
$$\alpha_k^r\frac{1-\alpha_k^{-2a'}-\alpha_k^{-2b'}+\alpha_k^{-2s-2r}}{1+\alpha_k^{-2a}+\alpha_k^{-2b}+\alpha_k^{-2s}}\ge \alpha_k \frac{1-2\alpha_k^{-4}}{1+\alpha_k^{-2}+\alpha_k^{-6}+\alpha_k^{-8}}\ge 1.92 >1\,.$$
Therefore, $F_k(a')F_k(b')>F_k(a)F_k(b)$, which contradicts the positivity of both sides of equation \eqref{eq:a+b=a'+b'eq1}.

Therefore, we must have $a+b\ge a'+b'$. Suppose that $a'+b'=a+b-r$ with $r>0$ and let $s=a+b$ as before. Following the same logic as in the previous case, 
\begin{multline*}
    \frac{F_k(a')F_k(b')}{F_k(a)F_k(b)} = \frac{(\alpha_k^{a'}-\bar{\alpha}_k^{a'})(\alpha_k^{b'}-\bar{\alpha}_k^{b'})}{(\alpha_k^a-\bar{\alpha}_k^{a})(\alpha_k^b-\bar{\alpha}_k^b)}\le \frac{(\alpha_k^{a'}+\alpha_k^{-a'})(\alpha_k^{b'}+\alpha_k^{-b'})}{(\alpha_k^a-\alpha_k^{-a})(\alpha_k^b-\alpha_k^{-b})}
    = \frac{\alpha_k^{a'+b'}+\alpha_k^{b'-a'}+\alpha_k^{a'-b'}+\alpha_k^{-a'-b'}}{\alpha_k^{a+b}-\alpha_k^{b-a}-\alpha_k^{a-b}+\alpha_k^{-a-b}}\\=\alpha_k^{-r} \frac{1+\alpha_k^{-2a'}+\alpha_k^{-2b'}+\alpha_k^{-2s-2r}}{1-\alpha_k^{-2a}-\alpha_k^{-2b}+\alpha_k^{-2s}}
    \le \alpha_k^{-1}\frac{1+2\alpha_k^{-4}+\alpha_k^{-10}}{1-\alpha_k^{-2}-\alpha_k^{-6}}<0.53<\frac{8}{9}\, .
\end{multline*}

\noindent As a result,
$$1-\frac{F_k(a')F_k(b')}{F_k(a)F_k(b)}>1-\frac{8}{9}=\frac{1}{9}\ge \frac{1}{9F_k(a)^2}\, .$$
Multiplying both sides by $3F_k(a)F_k(b)F_k(c),$ results in
$$3F_k(c)\left(F_k(a)F_k(b)-F_k(a')F_k(b')\right)>\frac{F_k(c)F_k(b)}{3F_k(a)}\, .$$
Since $(F_k(a),F_k(b),F_k(c))$ is minimal, we have $F_k(c)\ge 3F_k(a)F_k(b)$. Consequently,
$$3F_k(c)\left(F_k(a)F_k(b)-F_k(a')F_k(b')\right)>\frac{F_k(c)F_k(b)}{3F_k(a)}\ge F_k(b)^2>F_k(b)^2-F_k(b')^2+F_k(a)^2-F_k(a')^2\, .$$
This contradicts equation \eqref{eq:a+b=a'+b'eq1}, and thus $a'+b'\geq a+b$ and therefore $a+b=a'+b'$.
\end{proof}

\begin{lemma}
\label{lemma:identityF3}
If $a$ is odd, $b$ is even, $b\geq a+3$ then
$$m_3(a,b,a+b)=m_3(a+1,b-1,a+b).$$
\end{lemma}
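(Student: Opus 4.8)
The plan is to cancel the common term $F_3(a+b)^2$ and reduce the claimed identity to
\begin{equation}
\label{eq:goalidentity}
F_3(a)^2+F_3(b)^2-F_3(a+1)^2-F_3(b-1)^2=3F_3(a+b)\bigl(F_3(a)F_3(b)-F_3(a+1)F_3(b-1)\bigr),
\end{equation}
and then to evaluate the two sides separately, using the parity hypotheses in an essential way. The key auxiliary fact I would first establish is the following identity: for any integers $x\ge y\ge 0$ with $x\equiv y\pmod 2$,
$$F_k(x)^2-F_k(y)^2=F_k(x+y)\,F_k(x-y).$$
This follows in two lines from Binet's formula \eqref{eq:Binet}: writing $F_k(n)^2=\tfrac1{D_k^2}(\alpha_k^{2n}+\alpha_k^{-2n}-2(-1)^n)$ (using $\alpha_k\bar\alpha_k=-1$), the terms $\mp 2(-1)^x$, $\pm 2(-1)^y$ cancel since $x\equiv y\pmod 2$, and one checks directly that the remaining expression equals $D_k^{-2}(\alpha_k^{x+y}-\bar\alpha_k^{x+y})(\alpha_k^{x-y}-\bar\alpha_k^{x-y})$, again because $x-y$ is even so that $(\alpha_k\bar\alpha_k)^{x-y}=1$.

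Next I would compute the left-hand side of \eqref{eq:goalidentity}. Since $a$ is odd and $b$ is even, the indices $b$ and $a+1$ are both even, and the indices $b-1$ and $a$ are both odd; moreover $b\ge a+3$ guarantees $b-a-1\ge 2$. Grouping the four squares as $\bigl(F_3(b)^2-F_3(a+1)^2\bigr)-\bigl(F_3(b-1)^2-F_3(a)^2\bigr)$ and applying the auxiliary identity to each pair gives
$$F_3(b)^2-F_3(a+1)^2=F_3(a+b+1)F_3(b-a-1),\qquad F_3(b-1)^2-F_3(a)^2=F_3(a+b-1)F_3(b-a-1),$$
so the left-hand side of \eqref{eq:goalidentity} equals $F_3(b-a-1)\bigl(F_3(a+b+1)-F_3(a+b-1)\bigr)$. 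Since $F_k(n+1)-F_k(n-1)=kF_k(n)$ by the recurrence \eqref{df:k_fibonacci}, and here $k=3$, this becomes $3F_3(a+b)F_3(b-a-1)$.

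For the right-hand side of \eqref{eq:goalidentity} I would apply Vajda's identity (Lemma \ref{lemma:Vajda}) with $n=a$, $i=1$, $j=b-a-1$, which gives
$$F_3(a+1)F_3(b-1)-F_3(a)F_3(b)=(-1)^aF_3(1)F_3(b-a-1)=-F_3(b-a-1),$$
because $a$ is odd. Hence $F_3(a)F_3(b)-F_3(a+1)F_3(b-1)=F_3(b-a-1)$, and the right-hand side of \eqref{eq:goalidentity} is also $3F_3(a+b)F_3(b-a-1)$. Comparing with the previous paragraph establishes \eqref{eq:goalidentity}, and hence the claim. The only real content is the choice of grouping of the four squared terms so that each pair of indices has matching parity; this is precisely what makes the difference-of-squares identity applicable, and it is exactly here that the hypotheses "$a$ odd, $b$ even" are used (together with the fact that the factor $k$ produced by $F_k(n+1)-F_k(n-1)$ matches the coefficient $3$ in the Markoff equation, which is why this phenomenon is special to $k=3$). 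I expect no genuine obstacle beyond bookkeeping with the parities and index ranges.
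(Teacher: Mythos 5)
Your proof is correct, and it takes a genuinely different (and somewhat cleaner) route than the paper's. Both arguments begin by cancelling $F_3(a+b)^2$ and reducing to
\[
F_3(a)^2+F_3(b)^2-F_3(a+1)^2-F_3(b-1)^2=3F_3(a+b)\bigl(F_3(a)F_3(b)-F_3(a+1)F_3(b-1)\bigr),
\]
but they diverge in how the four squares are grouped. The paper pairs \emph{adjacent} indices, applying the Simson identity \eqref{eq:Simson} to $F_3(a)^2-F_3(a+1)^2$ and to $F_3(b)^2-F_3(b-1)^2$ (the parities make the two $\pm1$ correction terms cancel), and then needs two further applications of Vajda's identity to convert $F_3(b)F_3(b-1)-F_3(a)F_3(a+1)$ into $3F_3(a+b)$ times the product difference. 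You instead pair the \emph{same-parity} indices $(a+1,b)$ and $(a,b-1)$, use a difference-of-squares factorization $F_k(x)^2-F_k(y)^2=F_k(x+y)F_k(x-y)$ for $x\equiv y\pmod 2$, extract the factor $3F_3(a+b)$ from the recurrence $F_3(n+1)-F_3(n-1)=3F_3(n)$, and need only one application of Vajda for the right-hand side. Two small remarks. First, your auxiliary identity need not be re-derived from Binet's formula: it is exactly the Catalan identity \eqref{eq:Catalan} with $n=x$, $r=y$, since $(-1)^{n-r}=1$ when $x\equiv y\pmod 2$, so you can simply cite Corollary \ref{cor:identities}. Second, your version makes transparent why the phenomenon is special to $k=3$: the coefficient $k$ produced by $F_k(n+1)-F_k(n-1)=kF_k(n)$ must match the coefficient $3$ in the Markoff equation — a point the paper's route leaves implicit. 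All index and parity conditions ($b-a-1\ge 2$, positivity of the Vajda parameters) are correctly accounted for by the hypothesis $b\ge a+3$.
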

\begin{proof} Using Simson identity \eqref{eq:Simson} for $a$ odd,
\begin{multline*}
F_3(a)^2-F_3(a+1)^2=F_3(a)^2-F_3(a)F_3(a+2)+(-1)^{a+1}=\\
=F_3(a)(F_3(a)-F_3(a+2))+(-1)^{a+1}=-3F_3(a)F_3(a+1)+1\,.
\end{multline*}
Using a similar argument for $b$ even, we have 
$$F_3(b)^2-F_3(b-1)^2=3F_3(b)F_3(b-1)-1.$$
Adding both expressions yields
\begin{equation}
    \label{eq:identityF3_eq1}
    F_3(a)^2+F_3(b)^2-F_3(a+1)^2- F_3(b-1)^2=3(F_3(b)F_3(b-1)-F_3(a)F_3(a+1)).
\end{equation}
We obtain the following identities by applying Vajda's identity (see Lemma \ref{lemma:Vajda}) and considering that $a$ is odd and $b$ is even:
\begin{gather*}
    F_3(b)F_3(b-1)-F_3(a+b)F_3(b-a-1)=(-1)^{b-a-1}F_3(a)F_3(a+1)=F_3(a)F_3(a+1)\\
    F_3(a+1)F_3(b-1)-F_3(a)F_3(b)=(-1)^aF_3(1)F_3(b-a-1)=F_3(b-a-1)
\end{gather*}
Thus,
$$F_3(b)F_3(b-1)-F_3(a)F_3(a+1)=F_3(a+b)F_3(b-1-a)=F_3(a+b)(F_3(a+1)F_3(b-1)-F_3(a)F_3(b)).$$
Substituting back in \eqref{eq:identityF3_eq1} yields
$$F_3(a)^2+F_3(b)^2-F_3(a+1)^2-F_3(b-1)^2=3F_3(a+b)(F_3(a+1)F_3(b-1)-F_3(a)F_3(b)).$$
Rearranging this equation yields the required result.

\end{proof}

\begin{theorem}[Theorem \ref{th2} of the Introduction] 
If $m$ admits a minimal Markoff $m$-triple with $k$-Fibonacci components then it is unique except 
for $k=3$ and all  pairs of triples $(F_3(a),F_3(b),F_3(a+b)), \,\,(F_3(a+1),F_3(b-1),F_3(a+b)),$ for $a$ odd, $b$ even and $b\geq a+3.$
\end{theorem}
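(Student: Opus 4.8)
The plan is to reduce the existence of two distinct minimal triples for a given $m$ to a single Diophantine equation (using Lemmas~\ref{lemma:c=c'}, \ref{lemma:a+b=a'+b'} and~\ref{lemma:aa'b'b}) and then to locate its solutions by a Binet-type estimate in the spirit of Lemma~\ref{lemma:KaramataBound}.

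\emph{Step 1 (reduction).} Suppose $m$ admits two distinct ordered minimal Markoff-Fibonacci $m$-triples $(F_k(a),F_k(b),F_k(c))$ and $(F_k(a'),F_k(b'),F_k(c'))$. By Lemma~\ref{lemma:c=c'} we have $c=c'$; by Lemma~\ref{lemma:a+b=a'+b'}, $a+b=a'+b'$; and, taking $a\le a'$, Lemma~\ref{lemma:aa'b'b} gives $a<a'\le b'<b$. Writing $a'=a+i$, $b'=b-i$ we have $i\ge 1$ and $2i\le b-a$, so $b\ge 3$ and $b-a-i\ge i\ge 1$. Subtracting the two instances of~\eqref{Markoffeq} gives
\begin{equation}\label{eq:thm2reduction}
R:=F_k(a)^2+F_k(b)^2-F_k(a+i)^2-F_k(b-i)^2=3F_k(c)\,Q,\qquad Q:=F_k(a)F_k(b)-F_k(a+i)F_k(b-i).
\end{equation}
By Vajda's identity (Lemma~\ref{lemma:Vajda} with $n=a$), $Q=(-1)^{a+1}F_k(i)F_k(b-a-i)$, while $R>0$ by the estimate $F_k(b)^2\ge k^2F_k(b-1)^2>2F_k(b-1)^2\ge F_k(b')^2+F_k(a')^2$ already used in the proof of Lemma~\ref{lemma:a+b=a'+b'}. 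Hence~\eqref{eq:thm2reduction} forces $Q>0$, so $a$ is odd, $Q=F_k(i)F_k(b-a-i)$, and~\eqref{eq:thm2reduction} states precisely that $\rho:=R/(3Q)$ equals the $k$-Fibonacci number $F_k(c)$.

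\emph{Step 2 (key estimate).} The heart of the argument is to show that $\rho$ is, except in one boundary case, \emph{not} a $k$-Fibonacci number that is $\ge F_k(a+b)$. Writing $\alpha=\alpha_k$ and $D=D_k=\sqrt{k^2+4}$ (so $\alpha\bar\alpha=-1$ and $\alpha-\alpha^{-1}=k$), I would expand $R$, $Q$ and the numbers $F_k(a+b),F_k(a+b+1)$ via Binet's formula~\eqref{eq:Binet} and carefully bound the lower-order powers of $\alpha$ to prove, for all $a$ odd, $b\ge a+2$ and $1\le i\le (b-a)/2$: \textup{(i)} if $k=2$, then $\rho<F_2(a+b+1)$; \textup{(ii)} if $k\ge 4$, or if $k=3$ and $i\ge 2$, then $F_k(a+b)<\rho<F_k(a+b+1)$; \textup{(iii)} if $k=3$ and $i=1$, then $\rho=F_3(a+b)$ when $b$ is even and $\rho<F_3(a+b)$ when $b$ is odd. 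The intuition is that $\rho$ lies between roughly $\tfrac{k}{3}F_k(a+b)$ (its size near $i=1$) and roughly $\tfrac{D}{3}F_k(a+b)$ (near $i=(b-a)/2$); since $1\le\tfrac{k}{3}<\tfrac{D}{3}<\alpha_k$, with $\tfrac{k}{3}>1$ exactly for $k\ge 4$, the number $\rho$ is trapped strictly between two consecutive $k$-Fibonacci numbers except precisely at $k=3$, $i=1$, $b$ even, where $Q=F_3(b-a-1)$ and Lemma~\ref{lemma:identityF3} delivers the exact identity $\rho=F_3(a+b)$.

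\emph{Step 3 (conclusion).} Since $(F_k(a),F_k(b),F_k(c))$ is minimal, Lemma~\ref{lemma:F(n+m)<=3F(n)F(m)} gives $c\ge a+b+1$ for $k=2$ and $c\ge a+b$ for $k\ge 3$; thus $\rho=F_k(c)$ is a $k$-Fibonacci number $\ge F_2(a+b+1)$ (if $k=2$) or $\ge F_k(a+b)$ (if $k\ge 3$). This contradicts (i) when $k=2$; it contradicts (ii) when $k\ge 4$, or $k=3$ with $i\ge 2$, since then $\rho$ lies strictly between consecutive $k$-Fibonacci numbers; and it contradicts (iii) when $k=3$, $i=1$, $b$ odd (then $\rho<F_3(a+b)\le F_3(c)$). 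The only surviving case is $k=3$, $i=1$, $b$ even, where $\rho=F_3(a+b)$ forces $c=a+b$; together with $a$ odd, $a+1=a'\le b'=b-1$, and the fact that $b$ even with $a$ odd rules out $b=a+2$, this is exactly the family $k=3$, $a$ odd, $b$ even, $b\ge a+3$, $c=a+b$. Conversely, for such $a,b$, Lemma~\ref{lemma:identityF3} gives $m_3(a,b,a+b)=m_3(a+1,b-1,a+b)$, and both $(F_3(a),F_3(b),F_3(a+b))$ and $(F_3(a+1),F_3(b-1),F_3(a+b))$ are distinct, ordered, and minimal by~\eqref{eq:F(n+m)<=3F(n)F(m)3} (neither $(a,b)$ nor $(a+1,b-1)$ equals $(1,1)$). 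Lastly, three distinct minimal triples for one $m$ would be pairwise of this shape, but each such pair is determined by either of its members, forcing two of the three to coincide; so each $m$ admits at most two such triples, exactly two occurring only in the stated family.

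The step I expect to be the main obstacle is Step~2: making the Binet estimates (i)--(iii) genuinely uniform in $a,b,i$, i.e.\ controlling all the $(-1)^n$ and $\alpha^{-n}$ correction terms tightly enough to separate $F_k(a+b)$ from $F_k(a+b+1)$ --- which is most delicate exactly at $k=3$, $i=1$, $b$ even, the very place where Lemma~\ref{lemma:identityF3} takes over and makes the inequality an equality.
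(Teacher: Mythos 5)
Your Step 1 and Step 3 match the paper's argument exactly: the reduction via Lemmas \ref{lemma:c=c'}, \ref{lemma:aa'b'b} and \ref{lemma:a+b=a'+b'}, the use of Vajda's identity to rewrite the difference of the two Markoff equations as $R=3F_k(c)F_k(i)F_k(b-a-i)$ with $a$ odd, and the identification of the exceptional family via Lemma \ref{lemma:identityF3} are all correct and are precisely what the paper does. The problem is Step 2. You state the three estimates (i)--(iii) --- that $\rho=R/(3Q)$ satisfies $\rho<F_2(a+b+1)$ for $k=2$, that $F_k(a+b)<\rho<F_k(a+b+1)$ for $k\ge 4$ or $k=3,\,i\ge 2$, and that $\rho\le F_3(a+b)$ with equality iff $b$ is even when $k=3,\,i=1$ --- but you do not prove any of them; you only say you \emph{would} expand everything by Binet's formula and ``carefully bound the lower-order powers of $\alpha$,'' and you yourself flag this as the main obstacle. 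These estimates are not routine bookkeeping: they are the entire technical content of the theorem once the reduction is done, and the paper spends the bulk of its proof establishing exactly them, case by case, using the elementary inequalities of Section \ref{section:preliminary} (in particular \eqref{eq:basic_bound}, \eqref{eq:bound_product}, \eqref{eq:bound_product2} and Lemma \ref{lemma:F(2b-2)lesssquares}) rather than raw Binet expansions. A uniform Binet-based proof of (i)--(iii) would have to control all the $(-1)^n$ and $\alpha_k^{-n}$ correction terms tightly enough to separate $\rho$ from two consecutive $k$-Fibonacci numbers for every admissible $a,b,i$, which is delicate precisely near the boundary $k=3,\,i=1$; asserting that this can be done is not the same as doing it. As it stands the argument is an (accurate) outline of a proof strategy, not a proof.

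Two smaller remarks. First, your claimed inequalities do appear to be true --- they are equivalent to the paper's case-by-case bounds (e.g.\ the paper's $k\ge 4$, $c=a+b$ case is exactly $\rho>F_k(a+b)$, and its $k\ge 3$, $c\ge a+b+1$ case is exactly $\rho<F_k(a+b+1)$) --- so the architecture is sound; only the substance is missing. Second, your closing observation that three distinct minimal triples for a single $m$ are impossible (because in each exceptional pair the smaller triple has odd first index while the larger has even first index, so no triple can play both roles) is a nice point that the paper leaves implicit, and it is correct.
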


\begin{proof}
Let $(F_k(a),F_k(b),F_k(c))$ and $(F_k(a'),F_k(b'),F_k(c'))$ be a pair of ordered minimal $m$-triples contradicting the theorem. By Lemma \ref{lemma:c=c'}, it follows that $c=c'$. Moreover, by Lemma \ref{lemma:aa'b'b} we can assume without loss of generality that $1\le a<a'\le b'<b\le c$ and by Lemma \ref{lemma:a+b=a'+b'} we must have $a+b=a'+b'$.  Taking $n=a$, $i=b'-a$ and $j=b-b'=a'-a$ in Vajda's identity (Lemma \ref{lemma:Vajda}), we transform equation \eqref{eq:a+b=a'+b'eq1} into
\begin{multline}\label{aodd}
    F_k(a)^2+F_k(b)^2-F_k(a')^2-F_k(b')^2=3F_k(c)\left(F_k(a)F_k(b)-F_k(a')F_k(b')\right) \\= (-1)^{a+1} 3F_k(c)F_k(b'-a)F_k(b-b')\, .
\end{multline}

\noindent From the proof of Lemma \ref{lemma:a+b=a'+b'}, the left-hand side of this equality is positive, therefore $a$ is odd, and hence
\begin{equation}
    \label{eq:thmminimaleq1}
    F_k(a)^2+F_k(b)^2-F_k(a')^2-F_k(b')^2=3F_k(c) F_k(b'-a)F_k(b-b')\,.
\end{equation}

In the case $k=2,$ using (\ref{case_2}) from Lemma \ref{lemma:F(n+m)<=3F(n)F(m)} twice, we obtain that
$$F_2(b)\le 3F_2(b')F_2(b-b') \le 9F_2(a)F_2(b'-a)F_2(b-b')\, .$$
Multiplying by $F_2(b)$ and by minimality, $3F_2(a)F_2(b)\le F_2(c)$, it follows that 
$$F_2(b)^2\le 9F_2(a)F_2(b)F_2(b'-a)F_2(b-b')\le 3F_2(c)F_2(b'-a)F_2(b-b')$$
and as a consequence
$$F_2(b)^2-F_2(b')^2+F_2(a)^2-F_2(a')^2<F_2(b)^2\le 3F_2(c)F_2(b'-a)F_2(b-b'),$$
which contradicts equation \eqref{eq:thmminimaleq1}.

In the case $k\geq 4,$ suppose that $c=a+b$. We want to prove 
\begin{equation}\label{contra_c=a+b}F_k(b)^2-F_k(b')^2+F_k(a)^2-F_k(a')^2>3F_k(c)F_k(b'-a)F_k(b-b'),
\end{equation}
contradicting \eqref{eq:thmminimaleq1}. 
First, since $F_k(b)\geq k F_k(b-1)\geq 4 F_k(b')$ by equation \eqref{eq:basic_bound}, we have
\begin{equation}\label{parcial2}  F_k(a')^2+F_k(b')^2\leq 2 F_k(b')^2\le \frac{1}{8} F_k(b)^2 <\frac{F_k(b
)^2}{4}.
\end{equation}

\noindent Now, using equation \eqref{eq:bound_product} twice, it follows that
 $$3 F_k(a+b)F_k(b-b')F_k(b'-a)\leq 3F_k(a+b) F_k(b-a-1) \le 3 F_k(2b-2).$$
The inequality above and \eqref{parcial2} give 
$$F_k(a')^2+F_k(b')^2+3 F_k(a+b)F_k(b-b')F_k(b'-a)<\frac{F_k(b)^2}{4}+3 F_k(2b-2)$$
and by Lemma \ref{lemma:F(2b-2)lesssquares}
$$\frac{F_k(b)^2}{4}+3 F_k(2b-2)\le \frac{F_k(b)^2}{4}+\frac{3}{4}F_k(b)^2=F_k(b)^2.$$
Due to the two inequalities above, \eqref{contra_c=a+b} holds.

In the case $k=3,$ suppose that $c=a+b$ and $b'\leq b-2.$ We want to prove 
\begin{equation}\label{eq:case3}F_3(b)^2>F_3(a')^2+F_3(b')^2+3F_3(a+b)F_3(b'-a)F_3(b-b'),
\end{equation} which contradicts equation \eqref{eq:thmminimaleq1}. Repeating the argument above, $$3F_3(a+b)F_3(b'-a)F_3(b-b')\le 3F_3(2b-2) \le \frac{3}{4}F_3(b)^2.$$
\noindent On the other hand, if $a'\leq b'\leq b-2,$ since $F_3(b)\geq 9F_3(b-2),$ we have
$$F_3(a')^2+F_3(b')^2\leq 2F_3(b')^2\leq 2F_3(b-2)^2\le \frac{2}{9} F_3(b)^2 <\frac{1}{4}F_3(b)^2. $$
Adding the two inequalities above, \eqref{eq:case3} holds.

In the case $k\geq 3,$ we first consider $c\geq a+b+1$. We will show that 
\begin{equation}F_k(b)^2-F_k(b')^2+F_k(a)^2-F_k(a')^2<3F_k(c)F_k(b'-a)F_k(b-b'),
\end{equation} which contradicts equation \eqref{eq:thmminimaleq1}. Then, since $F_k(b')> F_k(a)$ it is enough to show that
\begin{equation} \label{contra_c>a+b}F_k(b)^2<3F_k(a+b+1)F_k(b'-a)F_k(b-b').
\end{equation}
\noindent By using equation \eqref{eq:bound_product2} twice, we obtain
\begin{multline*}3F_k(a+b+1)F_k(b'-a)F_k(b-b')\ge 3F_k(a+b+1)\frac{1}{(1+\frac{1}{9})}F_k(b-a-1)\ge \\\frac{3}{\left(1+\frac{1}{9}\right)^2}F_k(2b-1)>F_k(2b-1).
\end{multline*}
On the other hand, applying  formula \eqref{eq:sum} to $b-1$ and $b,$ it follows that
$$F_k(2b-1)=F_k(b)^2+F_k(b-1)^2>F_k(b)^2.$$
The two inequalities above show that \eqref{contra_c>a+b} holds.

Finally, we study the last case; $k=3,$  $c=a+b$, $b'= b-1$ and $a$ odd (see equation \eqref{aodd}). This is precisely addressed in Lemma \ref{lemma:identityF3},  which identifies the minimal pairs of  Markoff $m$-triples with $k$-Fibonacci components satisfying $m=m_3(a,b,a+b)=m_3(a+1,b-1,a+b),$ where $b$ is even. Note that the condition $b\geq a+3$ in that lemma implies that the triple $(F_3(a+1),F_3(b-1),F_3(a+b))$ is ordered, so $(F_3(a+1),F_3(b-1),F_3(a+b))$ and $(F_3(a),F_3(b),F_3(a+b))$ are distinct. This, however, does not hold if $b=a+1.$
If $b$ were odd, we would have in the last equality of Lemma \ref{lemma:a+b=a'+b'}
    $$F_3(a)^2+F_3(b)^2-F_3(a+1)^2-F_3(b-1)^2=3F_3(a+b)(F_3(a+1)F_3(b-1)-F_3(a)F_3(b))+6.$$
Therefore, if $b$ were odd, $m_3(a,b,a+b)>m_3(a+1,b-1,a+b).$

\end{proof}

%


\begin{thebibliography}{4444}

\bibitem[ACMRS]{ACMRS}  Alfaya, D.,  Calvo, L. A..  Martínez de Guinea, A., Rodrigo, J.  Srinivasan, A., (2024)  {A classification of Markoff-Fibonacci $m$-triples}, \href{https://arxiv.org/html/2405.08509v1}{arXiv:2405.08509v1}

\bibitem[AL]{AL} Altassan, A., Luca, F. (2021) {\it Markov type equations with solutions in Lucas sequences}, Mediterr. J. Math. 18(87), https://doi.org/10.1007/s00009-021-01711-x1660-5446/21/030001-12

\bibitem[GS]{GS} Ghosh, A., Sarnak, P. (2022) {\it Integral points on Markoff type cubic surfaces}, Invent. math. \textbf{229}, 689-749. 

\bibitem[F]{F} Falcon, S. (2011). {\it On the k-Lucas Numbers}, International Journal of Contemporary Mathematical Sciences. \textbf{6}. 21, 1039-1050. 

\bibitem[Gom]{Gom}  Gómez, C. A., Gómez, J. C.,  Luca, F. (2020) Markov triples with k-generalized Fibonacci components. Annales Mathematicae et Informaticae, \textbf{52}. pp. 107-115. ISSN 1787-6117

\bibitem[LS]{LS} Luca, F., Srinivasan, A. (2018) {\it Markov equation with Fibonacci components}, The Fibonacci Quarterly, \textbf{56}, no. 2, p. 126

\bibitem[M1]{M1}
Markoff, A. A. (1879) {\it Sur les formes quadratiques binaires ind\'efinies},
Mathe\-matische Annalen \textbf{15}, 381-496.

\bibitem[M2]{M2}
Markoff, A. A. (1880) {\it Sur les formes quadratiques binaires ind\'efinies
(second m\'emoire)}, Mathematische Annalen \textbf{17}, 379-399.

\bibitem[Mor]{Mor}
Mordell, L. J. (1953) {\it On the Integer Solutions of the Equation $x^2+y^2+z^2+2xyz = n$}, Journal of the London Mathematical Society, Volume s1-28, Issue 4, 500-510. 

\bibitem[K]{K} Karamata, J. (1932), {\it Sur une in\'egalit\'e relative aux fonctions convexes}, Publ. Math. Univ. Belgrade (in French), \textbf{1}: 145--148.

\bibitem[Ko]{Ko} Koshy, T. (2001), {\it Fibonacci and Fibonacci Lucas Numbers with Applications}. John Wiley and Sons Inc., NY.
http://dx.doi.org/10.1002/9781118033067

\bibitem[KST]{KST}
Kafle B., Srinivasan A., Togbe A, (2020) {\it  Markoff Equation with Pell Components}, Fibonacci Quart. \textbf{58}, no. 3, 226–230.

\bibitem[SC]{SC} Srinivasan, A., Calvo, L. (2023) {\it Counting Minimal Triples for a Generalized Markoff Equation}, Experimental Mathematics, 1--12. https://doi.org/10.1080/10586458.2024.2338279.


\bibitem[RSP]{RSP} Rayaguru, S. G., Sahukar, M. K., Panda, G. K. (2020) {\it Markov equation with components of some binary recurrence sequences}, Notes Number Theory Discrete Math., \textbf{26}, 3, 149--159.

\bibitem[V]{V} Vajda, S. (2008) \textit{Fibonacci and Lucas Numbers, and the Golden Section: Theory and Applications}. Dover. ISBN 978-0486462769. p. 28. (Original publication 1989 at Ellis Horwood).

\end{thebibliography}
\end{document}